\documentclass[a4paper,12pt,reqno]{amsart}

\usepackage[utf8]{inputenc}
\usepackage{amssymb}
\usepackage{amsmath}
\usepackage{amsthm}
\usepackage{amsfonts}
\usepackage{mathtools}
\usepackage{hyperref}
\usepackage{cleveref}
\usepackage[pdftex,dvipsnames]{xcolor}
\usepackage{todonotes}
\usepackage{comment}
\usepackage{mathrsfs}
\usepackage{enumitem}
\usepackage[normalem]{ulem}
\usepackage{float}

\makeatletter
\def\dicho#1{\expandafter\@dicho\csname c@#1\endcsname}
\def\@dicho#1{\ifnum#1>1 or \else\fi(\@Roman#1)}
\makeatother
\AddEnumerateCounter{\dicho}{\@dicho}{or (III)}
\newlist{dichotomy}{enumerate}{1}
\setlist[dichotomy]{label=\dicho*,leftmargin=1.5cm}

\setlength\marginparwidth{2cm}


\usepackage[british]{babel}
\usepackage[margin=1.2in]{geometry}
\setlength{\belowcaptionskip}{-0.3em}

\usepackage{eqparbox}

\usepackage{pbox}


\renewcommand{\O}{\mathcal{O}} 

\newcommand{\ooverline}[1]{\overline{#1\rule{0pt}{6pt}}}

\newcommand{\set}[1]{\left\lbrace #1 \right\rbrace}

\newcommand{\diamondop}[1]{\langle #1 \rangle}
\newcommand{\field}[1]{\mathbb{#1}}  
\newcommand{\Q}{\field{Q}} 
\newcommand{\C}{\field{C}} 
\newcommand{\Z}{\field{Z}} 
\newcommand{\F}{\field{F}} 
\newcommand{\A}{\field{A}}

\newcommand{\Fbar}{\ooverline{\F}}
\renewcommand{\P}{\field{P}}
\newcommand{\PP}{\field{P}}





\DeclareMathOperator{\Char}{char}

\DeclareMathOperator{\Aut}{Aut}
\DeclareMathOperator{\PSL}{PSL}
\DeclareMathOperator{\SL}{SL}

\DeclareMathOperator{\Pic}{Pic}

\DeclareMathOperator{\Sym}{Sym}

\DeclareMathOperator{\gon}{gon}
\DeclareMathOperator{\aut}{Aut}

\DeclareMathOperator{\Proj}{Proj}

\DeclareMathOperator{\rank}{rank}


\newtheorem{lemma}{Lemma}
\newtheorem{theorem}[lemma]{Theorem}
\newtheorem{proposition}[lemma]{Proposition}
\newtheorem{corollary}[lemma]{Corollary}
\newtheorem{conjecture}[lemma]{Conjecture}

\theoremstyle{definition}
\newtheorem{definition}[lemma]{Definition}

\newtheorem{question}[lemma]{Question}
\newtheorem{remark}[lemma]{Remark}

\numberwithin{lemma}{section}
\numberwithin{equation}{section}
\numberwithin{figure}{section}

\newcommand{\filip}[1]{{\color{purple} \sf $\diamondsuit\diamondsuit\diamondsuit$ Filip: [#1]}}
\newcommand{\maarten}[1]{{\color{Green} \sf $\diamondsuit\diamondsuit\diamondsuit$ Maarten: [#1]}}

\title{Hyperelliptic and trigonal modular curves in characteristic $p$}
\author[Derickx]{Maarten Derickx}
\address{Maarten Derickx, Den Haag, The Netherlands}
\email{\url{maarten@mderickx.nl}}
\urladdr{\url{http://www.maartenderickx.nl/}}

\author[Najman]{Filip Najman}
\address{Filip Najman, University of Zagreb, Bijeni\v{c}ka Cesta 30, 10000 Zagreb, Croatia}
\email{\url{fnajman@math.hr}}
\urladdr{\url{https://web.math.pmf.unizg.hr/~fnajman/}}
\subjclass[2020]{11G18, 14G35}
\thanks{F.N. was supported by the  project “Implementation of cutting-edge research and its application as part of the Scientific Center of Excellence for Quantum and Complex Systems, and Representations of Lie Algebras“, PK.1.1.02, European Union, European Regional Development Fund. and by the Croatian Science Foundation under the project no. IP-2022-10-5008.}

\date{}

\makeatletter
\providecommand\@dotsep{5}
\renewcommand{\listoftodos}[1][\@todonotes@todolistname]{%
  \@starttoc{tdo}{#1}}
\makeatother

\begin{document}

\maketitle

\begin{abstract} Let $X_\Delta(N)$ be an intermediate modular curve of level $N$, meaning that there exist (possibly trivial) morphisms $X_1(N)\rightarrow X_\Delta(N) \rightarrow X_0(N)$.
For all such intermediate modular curves, we give an explicit description of all primes $p \nmid N$ such that $X_\Delta(N)_{\overline{\mathbb F}_p}$ is either hyperelliptic or trigonal. Furthermore we also determine all primes $p$ such that $X_\Delta(N)_{\F_p}$ is trigonal.

This is done by first using the Castelnuovo-Severi inequality to establish a bound $N_0$ such that if $X_0(N)_{{\overline{\mathbb F}_p}}$ is hyperelliptic or trigonal, then $N \leq N_0$. To deal with the remaining small values of $N$, we develop a method based on the careful study of the canonical ideal to determine, for a fixed curve $X_\Delta(N)$, all the primes $p$ such that the $X_\Delta(N)_{ {\overline{\mathbb F}_p}}$ is trigonal or hyperelliptic.

Furthermore, using similar methods, we show that $X_\Delta(N)_{{\overline{\mathbb F}_p}}$ is not a smooth plane quintic, for any $N$ and any $p$.
\end{abstract}

\section{Introduction}
Let $k$ be a field and $C$ a curve over $k$. Throughout the paper we will assume all curves are geometrically integral and proper over $k$. The \textit{gonality} $\gon_k C$ of $C$ over $k$ is the least degree of a non-constant morphism $f:C\rightarrow \PP^1_k$. Equivalently, it is the least degree of a non-constant function $f\in k(C)$.

We will study the gonality of modular curves, and in particular of ``intermediate" modular curves $X_\Delta(N)$. Let $N$ be a positive integer and let $\Delta$ be a subgroup of $(\Z/ N\Z)^\times /\diamondop{-1}$; we will say that $\Delta$ is of level $N$. Let $X_\Delta(N)$ be the modular curve, defined over $\Q$, associated to the modular group $\Gamma_\Delta=\Gamma_\Delta(N)$ defined by
$$\Gamma_\Delta=\left\{
\begin{pmatrix} a & b\\ c & d\end{pmatrix}\in \SL_2(\Z) \mid c \equiv 0 \text { mod } N, (a \text{ mod } N) \in \Delta
\right\}.$$
If $\Delta$ is the trivial subgroup of $(\Z/ N\Z)^\times /\diamondop{-1}$, then $X_\Delta(N)=X_1(N)$ and for $\Delta=(\Z/ N\Z)^\times /\diamondop{-1}$ we have $X_\Delta(N)=X_0(N)$. The morphisms $X_1(N)\rightarrow X_0(N)$ factor through $X_\Delta(N)$:  $X_1(N)\rightarrow X_\Delta(N) \rightarrow X_0(N)$.

Gonalities of modular curves over fields of characteristic $0$, most commonly $\Q$ or $\C$, have received considerable attention. The modular curves that have been most studied are $X_0(N)$ and $X_1(N)$. For $X_0(N)$ the results are as follows. Ogg \cite{Ogg74} determined the hyperelliptic modular curves $X_0(N)$. Hasegawa and Shimura \cite{HasegawaShimura_trig} determined the $X_0(N)$ that are trigonal over $\C$ and over $\Q$, and Jeon and Park \cite{JeonPark05} determined the $X_0(N)$ that are tetragonal over $\C$. More recently, Najman and Orlić \cite{NajmanOrlić} determined the $X_0(N)$ that are tetragonal and pentagonal over $\Q$ and determined the gonality of $X_0(N)$ for all $N<135.$

All the curves $X_1(N)$ with $\gon_\Q X_1(N)=d$ were determined for $d=2$ by Ishii and Momose \cite{IshiiMomose} \footnote{Ishii and Momose also determined all the hyerelliptic $X_\Delta(N)$, though there is an error in the paper; see \cite[Lemma 4.2]{JeonKim05}. In \cite{JeonKim2020}[Thm 3.6] it was finally shown that over $\C$ the curve $X_\Delta(21)$ with $\Delta = \set{\pm 1, \pm 8}$ is indeed the only hyperelliptic intermediate modular curve $X_\Delta(N)$ that is not of the form $X_0(N)$ or $X_1(N)$. Note that our results do not rely on \cite{IshiiMomose}.}, for $d=3$ by Jeon, Kim and Schweizer \cite{JeonKimSchweizer04} and for $d=4$ by Jeon, Kim and Park \cite{JeonKimPark06} and for $5 \leq d \leq 8$ by Derickx and van Hoeij \cite{derickxVH}. Derickx and van Hoeij \cite{derickxVH} also determined $\gon_\Q X_1(N)$ for all $N\leq 40$ and gave upper bounds for $N\leq 250$.

More generally, Abramovich \cite{abramovich} gave a lower bound for the gonality of any modular curve over $\C$ (which is usually not sharp). From this result, it easily follows that there are only finitely many modular curves $X$ with $\gon_\C X \leq d$ for some fixed positive integer $d$.

 In this paper we consider the gonality of the modular curves $X_\Delta(N)$ over $\F_p$ and $ {\Fbar_p}$ for primes $p$ of good reduction. 
 Poonen \cite{Poonen2007} showed that if one fixes the prime $p$, then the set of $\Gamma$ such that $\gon_{{\Fbar_p}}{X_\Gamma}\leq d$ is finite, and gave lower bounds on $\gon_{\F_{p^2}}{X_\Gamma}$ and $\gon_{\Fbar_{p}}{X_\Gamma}$, depending on $p$ and the index of $\Gamma$ in $\PSL_2(\Z)$ \cite[Proposition 3.1]{Poonen2007}.

 In \Cref{t1} and \Cref{t2} we will explicitly find all the pairs $(\Gamma_\Delta,p)$, with $p$ not dividing the level of $\Gamma_\Delta$ such that $\gon_{\F_p}{X_\Gamma}=2$ or $\gon_{\F_p}{X_\Gamma}=3$.

 It is easy to see that, for any curve $X$, $\gon_{\F_{p}}X\leq \gon_{\Q}X$. It is natural to ask, in the case of (some sets of) modular curves, when does equality hold? A question in this direction, which was also one of the motivations for this paper, was asked by David Zureick-Brown on MathOverflow\footnote{\tiny \url{https://mathoverflow.net/questions/132618/hyperelliptic-modular-curves-in-characteristic-p}}.

\begin{question} \label{q1}
Are there any $N$ such that $X_0(N)_\Q$ is not hyperelliptic but for some $p$ not dividing $N$, $X_0(N)_{\F_p}$ is hyperelliptic?
\end{question}

We show in \Cref{t1} below that the answer to this question is negative. We consider the following more general questions.

\begin{question} \label{q2} \,

\begin{itemize}
    \item[A)] Given some family of modular curves $S$ and a positive integer $d$, can we determine all $X\in S$ and primes $p$ of good reduction for $X$ such that
$$\gon_{\F_p}X = d?$$
\item[B)]
Given $S$ and $d$, can we also determine the $X$ and $p$ as above such that
$$\gon_{\Fbar_p}X = d?$$
\end{itemize}
\end{question}

For $d=2$ both versions of the question are equivalent for all $X \in S$, since  over a finite field $\F_p$ every conic has a point, hence $X$ is hyperelliptic over $\F_p$ if and only if it is hyperelliptic over $ \Fbar_p$. We answer these questions for $d=2$ and $3$ where $S$ is the set of all intermediate modular curves $X_\Delta(N)$.

\begin{theorem} \label{t1}
Let $N$ be a positive integer, $p$ a prime not dividing $N$, and $X_\Delta(N)$ an intermediate modular curve of level $N$. Then
$$2=\gon_{{\Fbar}_p}X_\Delta(N) < \gon_{\C}X_\Delta(N).$$
if and only if $N=37$, $\Delta=\langle 4 \rangle \leq (\Z/ N\Z)^\times$ and $p=2$.
\end{theorem}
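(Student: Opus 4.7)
The plan is to proceed in two stages: first reduce to a finite list of candidate pairs $(N, \Delta)$ via the Castelnuovo--Severi inequality, and then for each candidate decide which primes $p$ of good reduction force hyperellipticity by studying the multiplication map on cusp forms.

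For the first stage, assume $X_\Delta(N)_{\Fbar_p}$ is hyperelliptic. Let $d_\Delta = [(\Z/N\Z)^\times/\diamondop{-1} : \Delta]$ be the degree of the natural map $\pi: X_\Delta(N) \to X_0(N)$. Combining $\pi$ with the hyperelliptic pencil and invoking Castelnuovo--Severi gives
$$g(X_\Delta(N)) \leq d_\Delta\, g(X_0(N)) + d_\Delta - 1$$
unless both maps factor through a common non-trivial intermediate cover. Standard genus formulas for modular curves make this fail for all $N$ beyond an explicit bound $N_0$. The degenerate factorisation case forces $X_0(N)$ either to be rational or to be itself hyperelliptic in characteristic $p$; the former leaves only the short list of $N$ with $g(X_0(N)) = 0$, and the latter was resolved in the previous part of the paper (the announced classification of hyperelliptic $X_0(N)_{\Fbar_p}$). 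Only finitely many $(N, \Delta)$ survive.

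For the second stage, I would restrict to candidates with $g := g(X_\Delta(N)) \geq 3$ that are not hyperelliptic over $\C$ (excluding the Ishii--Momose list), since the strict inequality $\gon_{\Fbar_p} X_\Delta(N) < \gon_\C X_\Delta(N)$ requires both. Hyperellipticity of the reduction is then detected by the multiplication map
$$\mu : \Sym^2 H^0(X_\Delta(N), \Omega^1) \longrightarrow H^0(X_\Delta(N), (\Omega^1)^{\otimes 2}).$$
By Max Noether's theorem (in any characteristic), a smooth curve of genus $g \geq 3$ is non-hyperelliptic iff $\mu$ is surjective of rank $3g-3$; the hyperelliptic case is exactly when the image has rank $2g-1$, equivalently when $\dim \ker \mu = \binom{g-1}{2}$ instead of $\binom{g-2}{2}$. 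I would realise $\mu$ as a map of free $\Z[1/N]$-modules by choosing a basis of $S_2(\Gamma_\Delta(N))$ with integral $q$-expansions and computing its matrix; the primes at which the rank of $\mu$ drops show up in the elementary divisors, and these are precisely the primes of hyperelliptic reduction.

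Executing this procedure for each candidate from Stage 1 should leave exactly one surviving triple: $(N, \Delta, p) = (37, \langle 4 \rangle, 2)$. The converse, that $X_{\langle 4 \rangle}(37)_{\Fbar_2}$ genuinely is hyperelliptic, I would verify by exhibiting an explicit degree-$2$ function, equivalently an involution with $\PP^1$ quotient on that reduction. The main obstacle is the second stage: setting up the rank test integrally with a guaranteed exhaustive check of all small primes, and confirming that each bad prime of $\mu$ truly corresponds to hyperelliptic reduction rather than an artefact of the chosen integral basis. A mild auxiliary point is that $g \leq 2$ candidates need no separate work, since every genus-$2$ curve is already hyperelliptic over $\C$ and so cannot satisfy the strict inequality demanded by the theorem.
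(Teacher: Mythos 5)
Your Stage 2 is essentially the paper's method: detect hyperelliptic reduction via the rank of the multiplication map $\Sym^2 H^0(X,\Omega^1)\to H^0(X,\Omega^{\otimes 2})$, computed integrally on $q$-expansions of cusp forms, with the bad primes read off from the Smith normal form. That part is sound and matches Section 4 of the paper.

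Stage 1, however, has a genuine gap, and it is the crucial step. Applying Castelnuovo--Severi to $\pi : X_\Delta(N)\to X_0(N)$ and the hyperelliptic pencil yields $g(X_\Delta(N)) \le d_\Delta\, g(X_0(N)) + d_\Delta - 1$, but by Riemann--Hurwitz $g(X_\Delta(N)) = d_\Delta\, g(X_0(N)) - d_\Delta + 1 + R/2$, where $R$ is the ramification of $\pi$, concentrated at elliptic points and cusps. The inequality is violated only when $R > 4(d_\Delta-1)$, and for $N$ prime (say) $R$ stays well below this threshold, so the inequality holds for arbitrarily large $N$ and no bound $N_0$ emerges. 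Worse, when $\Delta$ is the full group --- i.e.\ $X_\Delta(N)=X_0(N)$, the heart of the theorem and of Zureick-Brown's question --- the map $\pi$ is the identity and your Castelnuovo--Severi application says nothing; you defer that case to ``the previous part of the paper,'' but this theorem \emph{is} that classification, so the argument is circular. The reduction that actually works applies Castelnuovo--Severi to the hyperelliptic map together with the Atkin--Lehner quotient $f_d:X_0(N)\to X_0(N)/w_d$: combined with Riemann--Hurwitz this forces either $g(X_0(N)/w_d)=0$ or $\nu(d;N)\le 4$, and since $\nu(N;N)$ equals $h(-4N)$ (plus $h(-N)$ when $N\equiv 3 \pmod 4$), which grows with $N$ by the resolved Gauss class number problem, only finitely many $N$ survive (the set $S_0$, after eliminating $88$, $148$, $232$ by passing to divisors). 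Your reduction of general $\Delta$ to the $X_0(N)$ case via the fact that a hyperelliptic curve cannot dominate a non-subhyperelliptic one is fine, but it only bites once the $X_0(N)$ case has been settled by the missing Atkin--Lehner argument.
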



\begin{theorem} \label{t2}
Let $N$ be a positive integer, $p$ a prime not dividing $N$, and $X_\Delta(N)$ an intermediate modular curve of level $N$. Then
$$3=\gon_{{\Fbar}_p}X_\Delta(N) < \gon_{\C}X_\Delta(N).$$
if and only if $X_\Delta(N)=X_0(73)$ and $p=2$.
\end{theorem}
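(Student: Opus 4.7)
My plan follows the strategy announced in the abstract: first use Castelnuovo–Severi to bound the level $N$, then analyze the canonical ideal of each remaining curve.

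\textbf{Bounding the level.} I would apply CS to a pair of morphisms out of $X_\Delta(N)_{\Fbar_p}$ of coprime degrees: the hypothetical trigonal map (degree $3$) and an Atkin–Lehner involution $w$ (degree $2$), yielding inequalities of the form $g(X_\Delta(N)) \leq 2 g(X_\Delta(N)/w) + 2$. For $X_0(N)$ with $w = w_N$ this, combined with the explicit genus formulas and the ramification of $w_N$ (controlled by CM point counts), forces $N \leq N_0$ for some explicit $N_0$. For intermediate curves, one either uses an inherited involution or composes the hypothetical degree-$3$ map with the cover $X_\Delta(N) \to X_0(N)$; Riemann–Hurwitz shows that $g(X_\Delta(N))$ grows with the index $[\Gamma_0(N):\Gamma_\Delta]$, so CS rules out all but finitely many $(N,\Delta)$.

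\textbf{Canonical ideal detection.} By Petri's theorem, a non-hyperelliptic canonically embedded curve $C \subset \P^{g-1}$ of genus $g \geq 4$ has its homogeneous ideal generated by quadrics unless $C$ is trigonal or a smooth plane quintic (genus $6$); in the exceptional cases, the $\binom{g-2}{2}$ quadrics through $C$ cut out a $2$-dimensional rational normal scroll (respectively a Veronese surface) on which $C$ lies. Since the paper separately shows that no $X_\Delta(N)_{\Fbar_p}$ is a smooth plane quintic, the trigonal test reduces to the following for each candidate $(N,\Delta)$ with $N \leq N_0$: compute from $q$-expansions a $\Z$-basis of $S_2(\Gamma_\Delta(N))$, determine the space of quadratic relations on the canonical image, and for each prime $p\nmid N$ check whether the mod-$p$ quadrics cut out a $2$-dimensional scroll rather than the curve itself. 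Comparison with the known lists of $X_\Delta(N)$ trigonal over $\C$ (Hasegawa–Shimura for $X_0(N)$, Jeon–Kim–Schweizer for $X_1(N)$, and by inclusion for intermediate $\Delta$) then identifies the curves that are trigonal in characteristic $p$ but not in characteristic zero.

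\textbf{Verifying $X_0(73)$.} Since $g(X_0(73))=5$, the canonical model sits in $\P^4$ and is contained in the zero locus of $\binom{3}{2}=3$ quadrics over any field. Over $\Q$ these quadrics already cut out the curve, so $X_0(73)_\C$ is not trigonal (matching Hasegawa–Shimura). Over $\F_2$, however, the three quadrics degenerate and instead cut out a rational normal scroll of degree $3$ in $\P^4$, on which $X_0(73)_{\Fbar_2}$ sits as a divisor; the ruling of the scroll restricts to a $g^1_3$ on the curve, establishing trigonality.

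The principal obstacle is the case-by-case computation: although there are only finitely many candidates, each requires constructing an integral cusp form basis, explicit quadric computations, and dimension checks at every relevant small prime. Small-genus cases are particularly delicate: in genus $4$ the test is whether the unique canonical quadric has rank $3$ (trigonal) or rank $4$ (tetragonal) after reduction; in genus $6$ the plane quintic exclusion must be invoked before applying Petri cleanly.
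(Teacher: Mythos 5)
Your proposal follows essentially the same route as the paper: Castelnuovo--Severi applied against the degree-$2$ Atkin--Lehner quotient, combined with Riemann--Hurwitz and the class-number formula for the ramification of $w_N$, bounds $N$; then Petri's theorem reduces trigonality mod $p$ to a rank computation on the quadrics through the canonical model, carried out via integral bases of cusp forms. The only structural difference is your treatment of intermediate $\Delta$ (CS applied to the cover $X_\Delta(N)\to X_0(N)$), where the paper instead just uses that gonality can only decrease along a covering map, so an exceptional $X_\Delta(N)$ forces $X_0(N)$ itself to have gonality $\leq 3$ over $\Fbar_p$; both work.

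One correction: your genus-$4$ test is wrong. Over an algebraically closed field every non-hyperelliptic genus-$4$ curve is trigonal, whatever the rank of its canonical quadric; rank $3$ versus rank $4$ only distinguishes whether there is one $g^1_3$ or two, and hence governs the \emph{field of definition} of the trigonal map (this is exactly the content of the paper's Section 6), not trigonality over $\Fbar_p$. This slip happens not to affect Theorem~\ref{t2}, since a genus-$4$ curve can never be an exceptional trigonal pair (it is trigonal over $\Fbar_p$ iff non-hyperelliptic there, and likewise over $\C$, and gonality only drops under reduction), but as stated the test would give wrong answers.
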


To answer \Cref{q2} A) for $d=3$, in \Cref{sec:fields_of_definition} we determine the fields of definition of all trigonal maps $X_\Delta(N) \rightarrow \PP^1$ in characteristic $p$, for all $p$ not dividing $N$.

Our results give evidence to the following conjecture.
\begin{conjecture}
    For any fixed integer $d$, there exist finitely many modular curves $X_\Gamma$ such that $(X_\Gamma)_{\overline \F_p}$ is of gonality $d$ for some prime $p$ of good reduction.
\end{conjecture}
\noindent Our results prove this conjecture for $d=2$ and $3$ and $X_\Gamma$ of the form $X_\Delta(N)$, as it is known that there are fintely many modular curves of fixed gonality over $\Q$ (see \cite{BGGP05}).

Poonen \cite[p.692]{Poonen2007} already mentioned that it is likely that a stronger version of the above conjecture is true. Indeed, one could expect that there is a constant $c > 0$ such that for all congruence subgroups $\Gamma \subseteq \PSL_2(\Z)$ and all primes not dividing the level of $\Gamma$ that $\gon_{\overline \F_p} (X_\Gamma) \geq c \, [\PSL_2(\Z): \Gamma]$.

Anni, Assaf and Lorenzo Garc\'ia recently proved \cite{AALG}, among other results, that there are no modular curves $X_\Delta(N)$ that have a smooth plane model of degree $5$. We prove the same result for all $X_\Delta(N)$ over all ${\Fbar_p}$.

\begin{theorem}\label{t3}
Let $N$ be a positive integer, $p$ a prime not dividing $N$, and $X_\Delta(N)$ an intermediate modular curve of level $N$. Then $X_{\Delta}(N)_{{\Fbar_p}}$ is not a smooth plane quintic.

\end{theorem}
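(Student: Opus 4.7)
The proof reduces to a finite case check followed by an application of the Enriques--Petri theorem. A smooth plane quintic has genus $\binom{5-1}{2}=6$ and gonality $4$, so any potential counterexample must be an intermediate modular curve $X_\Delta(N)$ of genus $6$ whose reduction $X_\Delta(N)_{\Fbar_p}$ is neither hyperelliptic nor trigonal. The latter two possibilities are ruled out by \Cref{t1,t2}. Since the set of genus-$6$ intermediate modular curves is finite and explicit, only finitely many candidate pairs $(\Delta,N)$ need to be analysed.

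For the surviving candidates the key tool is the Enriques--Petri theorem, valid in arbitrary characteristic in the case we need: a canonically embedded, non-hyperelliptic, non-trigonal curve of genus $g\geq 4$ has canonical ideal generated by its elements of degree $2$, unless $g=6$ and the curve is a smooth plane quintic. In the exceptional case, the $\binom{g-2}{2}=6$ quadrics in the canonical ideal vanish on the Veronese surface $v_2(\PP^2)\subset\PP^5$ containing the curve rather than on the curve itself, so cubics are needed to cut the curve out of this surface. Consequently, to exclude smooth plane quintics it is enough to show that for each surviving candidate and each $p\nmid N$, the six quadrics in the canonical ideal of $X_\Delta(N)_{\Fbar_p}$ already generate the full canonical ideal.

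This is a finite, explicit check, to be performed uniformly in the characteristic by the canonical-ideal techniques developed earlier in the paper. Concretely, for each candidate $X_\Delta(N)$ one constructs a canonical model $\mathcal{C}\subset\PP^5_{\Z[1/N]}$ from $q$-expansions of weight-$2$ cuspforms, computes an explicit basis of the degree-$2$ part of its canonical ideal over $\Z[1/N]$, and exhibits the remaining (cubic) generators of the canonical ideal as $\Z[1/N]$-linear combinations of products of the quadrics with linear forms. Quadric generation of the canonical ideal then persists under reduction modulo every $p\nmid N$, ruling out a smooth plane quintic structure in every such characteristic.

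The principal obstacle is exactly this uniformity in $p$: in any single characteristic the required statement is a routine Gr\"obner basis calculation, but here one has to carry out the analysis integrally over $\Z[1/N]$ in order to control every residue characteristic at once, since upper semicontinuity of fibre dimension alone would allow the quadric locus to jump from the curve to a $2$-dimensional Veronese surface in some characteristic. This integral treatment is precisely the ``careful study of the canonical ideal'' announced in the abstract and developed here for the trigonal problem, and it is reused essentially verbatim in the plane-quintic setting.
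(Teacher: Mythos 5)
Your proposal is correct and follows essentially the same route as the paper: restrict to the finitely many genus-$6$ intermediate modular curves, and then use the Petri-type criterion of \Cref{lem:hyp_crit} --- checking integrally over $\Z[1/N]$ (via the multiplication map $V\otimes (I_{can})_2 \to (I_{can})_3$ in Smith normal form) that the quadrics generate the canonical ideal in every residue characteristic, which simultaneously excludes the trigonal and smooth-plane-quintic cases exactly as in \Cref{subsec:trig_small} and \Cref{subsec:spq}. The only cosmetic difference is that the paper phrases the check as a rank/dimension count of $(I_{can})_3/(V\cdot (I_{can})_2)$ rather than as exhibiting explicit cubic generators, and it handles the prerequisite non-hyperellipticity by the degree-$2$ criterion rather than by citing \Cref{t1} directly.
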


We prove our results in 2 steps. First, we show that the set of $N$ we need to consider is such that the class number $h(-4N)$ is not too large (see \Cref{lem:ramification_formula} and \Cref{prop:conditions}). The (now proven) Gauss conjecture then reduces the problem to dealing with only finitely many $N$. This strategy could in principle also be used to classify all hyperelliptic and trigonal modular curves over $\mathbb Q$ and $\mathbb C$.

Second, to deal with the remaining $N$, we develop explicit computational criteria (see \Cref{sec:hyp_small}), based on Petri's theorem, to check whether, for a given $N$, there exists an $X_\Delta(N)$ of level $N$ and a prime $p$ not dividing $N$ such that $X_\Delta(N)_{\F_p}$ is hyperelliptic/trigonal/a smooth plane quintic.

All the code used to obtain our results can be found at \cite{code}.

\subsection{Acknowledgements}
We thank John Voight and David Zureick-Brown for their helpful discussion and pointers to relevant literature. We are also thankful to Petar Orlić for pointing out some mistakes in an earlier version of this paper.

\section{Background and notation}
\label{sec:background}

We now set up notation that will be used throughout the paper. For a field $k$, we denote the separable closure of $k$ by $k^{sep}$. By $X_0(N)$ we denote the classical modular curve parametrizing isomorphism classes of pairs $(E, C)$ of generalized elliptic curves together with a cyclic subgroup $C$ of order $N$. We will call a divisor $d>1$  of $N$ an \textit{Atkin-Lehner divisor} if $(d,N/d)=1$. For any Atkin-Lehner divisor $d$ of $N$, with $N=dm$, the \textit{Atkin-Lehner involution} $w_d$ acts on a point $x\in Y_0(n)$, where $x$ corresponds to $(E,C_d,C_{m})$ with $C_d$ and $C_m$ being cyclic groups of order $d$ and $m$, to the point $w_d(x)$, corresponding to $(E/C_d,E[d]/C_d,(C_{m}+C_d)/C_d)$. The Atkin-Lehner involutions form a subgroup of $\Aut(X_0(N))$ isomorphic to $(\Z/2\Z)^{\omega(N)}$, where $\omega(N)$ is the number of prime divisors of $N$. The curve $X_0(N)$ and all its Atkin-Lehner involutions are defined over $\Q$. The quotient $X_0(N)/w_N$ is denoted by $X_0^+(N)$ and the quotient of $X_0(N)$ by the whole group of Atkin-Lehner involutions is denoted by $X_0^*(N)$.

The number of the ramification points $\nu(d, N)$ of the map $X_0(N)\rightarrow X_0(N)/w_d$ will be given in terms of class numbers of imaginary quadratic fields (see \Cref{lem:ramification_formula} and \Cref{prop:conditions}). We give the relevant data, which is taken from \cite{Watkins2004}, about quadratic imaginary fields of class number up to $100$ in \Cref{appendix}

By $X_1(N)$ we denote the modular curve whose $k$-rational points parameterize pairs $(E,P)$ where $E$ is a generalized elliptic curve and $P\in E(k)$ is a point $N$, up to $k$-isomorphism. For a $d\in (\Z/N\Z)^\times /\diamondop{-1}$, the diamond operator $\langle d\rangle $ acts on $X_1(N)$ by sending $(E,P)$ to $(E,dP)$. This makes it possible to see $ (\Z/N\Z)^\times /\diamondop{-1}$ as a subgroup of $\Aut X_1(N)$. For a subgroup $\Delta \leq (\Z/N\Z)^\times /\diamondop{-1}$, $X_\Delta(N)$ is the quotient of $X_1(N)$ by the automorphism group of diamond operators $\langle d \rangle$, for $d\in \Delta$. We note that for $\Delta=(\Z/N\Z)^\times$, we have $X_\Delta(N) =X_0(N)$ and for $\Delta=\diamondop{-1}$, we have $X_\Delta(N)=X_1(N)$. For any $\Delta \leq (\Z/N\Z)^\times /\diamondop{-1}$, the curve $X_\Delta(N)$ lies "in between" $X_0(N)$ and $X_1(N)$, in the sense that there are (Galois) morphisms
$$X_1(N)\rightarrow X_\Delta(N)\rightarrow X_0(N);$$
hence the curves $X_\Delta(N)$ are called \textit{intermediate modular curves}.

Let $d$ be an Atkin-Lehner divisor of $N$, $m=N/d$ and $\zeta_d$ an d-th root of unity. Note that $w_d$ can be lifted to an automorphism $w_{d,\zeta_d}$  of $X_1(N)$ over $\Z[1/N][\zeta_d]$. On a point $x\in Y_1(N)$ this automorphism has the following description. Let $(E,P_d,P_m)$ be an elliptic curve with points $P_d,P_m$ of order $d$ and $m$, so that $(E,P_d+P_m)$ corresponds to $x$. Then $w_{d,\zeta_d}(x)$ is the point corresponding to $(E/\langle P_d\rangle, Q_d + (P_m \mod \langle P_d\rangle ) )$ where $Q_d$ is the point in $E/\langle P_d\rangle$ such that any preimage $Q'_d$ in $E$ satisfies $\langle Q'_d, P_d \rangle = \zeta_d$, where $\langle \_,\_ \rangle$ is the Weil pairing.  Note if $l$ is an integer co-prime to $d$ then $w_{d,\zeta_d^l}(x)$ corresponds to $(E/\langle P_d\rangle, \langle l \rangle Q_d + (P_m \mod \langle P_d\rangle ) )$. It follows that for $l \equiv 1 \mod m$ we have $w_{d,\zeta_d^l} = \langle l \rangle w_{d,\zeta_d}$.

\begin{remark}
	Let $\Delta \leq (\Z/N\Z)^\times /\diamondop{-1}$ be a subgroup; then in general $w_{d,\zeta_d}$ doesn't induce an automorphism of $X_\Delta(N)_{\Z[1/N][\zeta_d]}$. However if $w_{d,\zeta_d} \Delta (w_{d,\zeta_d})^{-1} = \Delta$ then $w_{d,\zeta_d}$ does induce an automorphism of $X_\Delta(N)_{\Z[1/N][\zeta_d]}$.
\end{remark}

Let $X/k$ be a curve over a number field $k$ of genus $g\geq 2$. The \textit{canonical ring}  (or as it is often called, the homogenous coordinate ring) of $X$ is
$$ R(X):=\bigoplus_{d=0}^{\infty}H^{0}(X, \Omega ^{\otimes d}_{X/k}).$$

Let $V:=H^0(X,\Omega_{X/k})$ and $\Sym(V):=\bigoplus_{d=0}^{\infty}\Sym^d(V)$. The identity map $\Sym^1(V)\rightarrow R(X)_1$ (which just sends $V$ to $V$) induces a map of graded rings $f_{can}:\Sym(V)\rightarrow R(X)$. Hence we obtain the \textit{canonical map}
$$X\simeq \Proj (R(X)) \rightarrow \Proj (\Sym (V)) \simeq \P^{g-1}_k.$$
The ideal $I_{can}:=\ker f_{can}\subseteq \Sym (V)$ is called the \textit{canonical ideal}. 

Recall Petri's theorem (\cite{Petri, Saint-Donat}, see also \cite[Section 1.1]{VoightDZB} for a historical overview): for a nonsingular projective curve $X$ of genus $\geq 2$ that is neither hyperelliptic, trigonal (possessing a map $X\rightarrow \PP^1$ of degree 3) nor a smooth plane curve of degree 5, the canonical ring $R(X)$ is generated in degree 1 and $I$ is generated in degree 2. More precisely, we have the following proposition.

\begin{proposition} \label{lem:hyp_crit}
Let $X/k$ be a curve of genus $g \geq 3$. By $V\cdot (I_{can})_2$ we denote the image of $V\otimes (I_{can})_2$ in $(I_{can})_3$.
\begin{itemize}
    \item[a)] $X$ is hyperelliptic over $\overline k$  if and only if $(I_{can})_2 \subseteq \Sym^2 (V)$ is of dimension $\binom{g-1}{2}$.
    \item[b)] Suppose $X$ is not hyperelliptic and not a smooth a plane quintic. Then $X$ is trigonal over $\overline k$ if and only if the dimension of $$(I_{can})_3/(V\cdot (I_{can})_2)$$ is $g-3$.
    \item[c)] Suppose $X$ is a smooth plane quintic over $\overline k$. Then $g=6$ and the dimension of $$(I_{can})_3/(V\cdot (I_{can})_2)$$ is $g-3=3$.
\end{itemize}
\end{proposition}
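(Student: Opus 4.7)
The plan is to prove the three parts by computing the dimensions of the graded pieces of the canonical ring and the canonical ideal directly in each geometric case, and then to invoke the classical Petri theorem to obtain the converse implications. The key Riemann--Roch inputs are that $\dim H^0(X, \Omega^{\otimes d}) = (2d-1)(g-1)$ for $d \geq 2$, while $\dim \Sym^d V = \binom{g+d-1}{d}$.

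For part (a), I would split into the two possible cases. If $X$ is non-hyperelliptic, then M.~Noether's theorem guarantees that $f_{can}$ is surjective in every positive degree; substituting the Riemann--Roch dimensions then gives $\dim(I_{can})_2 = \binom{g+1}{2} - (3g-3) = \binom{g-2}{2}$. If $X$ is hyperelliptic, the canonical map factors through the $(g-1)$-uple Veronese $\PP^1 \to \PP^{g-1}$ composed with the hyperelliptic quotient $X \to \PP^1$, so the image of $\Sym^2 V \to R(X)_2$ is pulled back from $H^0(\PP^1, \O(2g-2))$ and therefore has dimension $2g-1$, yielding $\dim(I_{can})_2 = \binom{g+1}{2} - (2g-1) = \binom{g-1}{2}$. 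For $g \geq 3$ the two candidate values $\binom{g-2}{2}$ and $\binom{g-1}{2}$ differ by $g-2 \geq 1$, so the dimension of $(I_{can})_2$ characterises hyperellipticity.

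For parts (b) and (c), the core input is Petri's theorem (\cite{Petri, Saint-Donat}), which describes the generators of the canonical ideal of a non-hyperelliptic canonically embedded curve. Specifically, Petri's theorem asserts that $I_{can}$ is generated in degree two --- so $(I_{can})_3 = V \cdot (I_{can})_2$ --- unless $X$ is trigonal or a smooth plane quintic. When $X$ is trigonal, the ruled surface swept out by the trisecants (the trigonal scroll) contributes exactly $g-3$ independent cubic generators modulo $V \cdot (I_{can})_2$. When $X$ is a smooth plane quintic, the canonical embedding factors through the $2$-uple Veronese of $\PP^2$, yielding a $3$-dimensional cokernel; since a smooth plane quintic has genus $\binom{5-1}{2}=6$, this equals $g-3=3$. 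Combining these three cases with part (a) to exclude the hyperelliptic locus yields (b) and (c).

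The main obstacle is Petri's theorem itself: establishing that the three geometric conditions (hyperelliptic, trigonal, smooth plane quintic) exhaust the situations in which the canonical ideal fails to be generated in degree two, and pinning down the exact dimension of the cokernel in the two exceptional non-hyperelliptic cases, is the substance of the classical theorem, which I would cite rather than reprove. All other steps reduce to Riemann--Roch dimension counts and the factorization argument for the canonical map in the hyperelliptic case.
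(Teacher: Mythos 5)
Your proposal is correct and ultimately rests on the same foundation as the paper's proof, which simply cites \cite[Table Ia]{VoightDZB} --- a table recording exactly the dimension counts you derive from Max Noether's theorem, Riemann--Roch, and Petri's theorem. Your write-up is somewhat more self-contained (notably the explicit computation of $\dim(I_{can})_2$ in the hyperelliptic versus non-hyperelliptic cases via the factorisation through the $(g-1)$-uple Veronese), but it is the same argument unpacked rather than a different route.
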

\begin{proof} This follows directly from \cite[Table Ia]{VoightDZB}.
\end{proof}

\section{Bounds for hyperelliptic and trigonal curves}

\label{sec:hyp_bounds}
\begin{definition}
    We say that a curve is \textit{subhyperelliptic} if it is of gonality $\leq 2$.
\end{definition}

\begin{proposition}[Castelnuovo-Severi inequality, {\cite[Theorem 3.11.3]{Stichtenoth09}}]
\label{tm:CS}
Let $k$ be a prefect field, and let $X,\ Y, \ Z$ be curves over $k$. Let non-constant morphisms $\pi_Y:X\rightarrow Y$ and $\pi_Z:X\rightarrow Z$ over $k$ be given, and let their degrees be $m$ and $n$, respectively. Assume that there is no morphism $X\rightarrow X'$ of degree $>1$ through which both $\pi_Y$ and $\pi_X$ factor. Then the following inequality hold:
\begin{equation}
g(X)\leq m \cdot g(Y)+n\cdot g(Z) +(m-1)(n-1).
\end{equation}
\end{proposition}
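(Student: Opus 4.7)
The plan is to prove this via a product-map construction combined with adjunction on the surface $S := Y \times Z$. After base changing to $\bar k$ (which preserves the degrees $m, n$, the geometric genera, and, by Galois descent, the no-common-factor hypothesis), I may assume $k$ is algebraically closed.

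First I form the morphism $\pi := (\pi_Y, \pi_Z) \colon X \to S$ and let $\bar X$ denote its image. The hypothesis forces $\pi \colon X \to \bar X$ to be birational: if it had degree $>1$, then both $\pi_Y$ and $\pi_Z$ would factor through $X \to \bar X$ via the two projections of $S$, contradicting the assumption. Hence $X$ is the normalization of $\bar X$, so $g(X) \leq p_a(\bar X)$, and it suffices to bound $p_a(\bar X)$.

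Next I bound $p_a(\bar X)$ by intersection theory on $S$. Let $F_Y, F_Z$ denote the fiber classes of the projections $S \to Y$ and $S \to Z$; they satisfy $F_Y^2 = F_Z^2 = 0$ and $F_Y \cdot F_Z = 1$. Since $\pi_Y|_{\bar X}$ has degree $m$ and $\pi_Z|_{\bar X}$ has degree $n$, we have $\bar X \cdot F_Y = m$ and $\bar X \cdot F_Z = n$, so numerically $\bar X \equiv m F_Z + n F_Y + D$ for some class $D$ with $D \cdot F_Y = D \cdot F_Z = 0$. The Hodge index theorem applied with the ample class $F_Y + F_Z$ gives $D^2 \leq 0$, hence $\bar X^2 \leq 2mn$. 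Combined with $K_S \equiv (2g(Y)-2) F_Y + (2g(Z)-2) F_Z$ and the adjunction formula, I obtain
\begin{equation*}
2 p_a(\bar X) - 2 \;=\; (K_S + \bar X) \cdot \bar X \;\leq\; m(2g(Y)-2) + n(2g(Z)-2) + 2mn,
\end{equation*}
which rearranges to $p_a(\bar X) \leq m \cdot g(Y) + n \cdot g(Z) + (m-1)(n-1)$.

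The main obstacle I anticipate is the birationality step, i.e.\ translating the no-common-factor assumption into the statement that $\pi \colon X \to \bar X$ has degree one; this amounts to observing in function fields that $k(Y) \cdot k(Z) = k(X)$ inside $k(X)$. Once that is in hand, the remaining argument is routine intersection theory on a product surface, and the chain $g(X) \leq p_a(\bar X) \leq m\, g(Y) + n\, g(Z) + (m-1)(n-1)$ closes the proof.
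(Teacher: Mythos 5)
Your proof is correct, but note that the paper gives no proof of this proposition at all: it simply cites Stichtenoth, Theorem 3.11.3, where the statement is Castelnuovo's inequality for a compositum of function fields $F=F_1F_2$ and the proof is purely function-field-theoretic (a counting argument with the Riemann--Roch theorem, no surface geometry). Your route --- mapping $X$ birationally onto its image $\bar X\subset Y\times Z$, then bounding $p_a(\bar X)$ via adjunction and the Hodge index theorem --- is the standard geometric alternative, and every step is characteristic-free: adjunction and Hodge index on the smooth surface $Y\times Z$ hold over any algebraically closed field, and $(F_Y+F_Z)^2=2>0$ suffices for the index-theorem step. Your identification of the no-common-factor hypothesis with $k(Y)\cdot k(Z)=k(X)$ is exactly the right way to get birationality of $X\to\bar X$; for the reduction to $\bar k$ the cleaner justification is not Galois descent but the observation that the compositum condition ascends: $\bar k(Y_{\bar k})\cdot \bar k(Z_{\bar k})$ contains both $\bar k$ and $k(Y)\cdot k(Z)=k(X)$, hence equals $\bar k(X_{\bar k})$ when $k$ is perfect and $X$ is geometrically integral. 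Comparing the two approaches: yours makes the error term $(m-1)(n-1)$ appear transparently as the difference between $\bar X^2\le 2mn$ and the genus formula on the product surface, while the function-field proof the paper cites avoids surface theory entirely and is stated in exactly the form (perfect constant field, no smoothness bookkeeping) that the paper invokes downstream.
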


Let, as before, $N$ be a integer, $d$ an Atkin-Lehner divisor of $N$ and let $f_{d}:X_0(N)\rightarrow X_0(N)/w_{d}$ be the quotient map by the Atkin-Lehner $w_{d}$ involution and denote by $\nu(d;N)$ the number of complex ramification points of $f_{d}$.

\begin{lemma}[{\cite[p.454.]{Ogg74}}, see also \cite{Kenku77}] \label{lem:ramification_formula} \label{lem:ram}
Let $N > 4$ be an integer. The number of ramification points $\nu(N;N)$ of $f_{N}$ satisfies
$$\nu(N;N)=\begin{cases} h(-4N) +h(-N) & \text{if } N \equiv 3 \pmod 4,\\
h(-4N) & \text{otherwise.}
\end{cases}$$
\end{lemma}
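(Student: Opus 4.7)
The plan is to interpret fixed points of $w_N$ on $X_0(N)$ via the moduli description, translate the condition into complex multiplication data, and count using class numbers of imaginary quadratic orders.

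A non-cuspidal geometric point of $X_0(N)$ is an isomorphism class $[(E,C)]$ with $C \subset E$ cyclic of order $N$, and $w_N[(E,C)] = [(E/C, E[N]/C)]$. A fixed point corresponds to an isomorphism $\phi\colon E/C \xrightarrow{\sim} E$ carrying $E[N]/C$ onto $C$. Writing $\pi\colon E \to E/C$ for the quotient, the endomorphism $\alpha := \phi \circ \pi \in \End(E)$ has degree $N$, satisfies $\ker\alpha = C$, and maps $E[N]$ onto $C$. Hence $\alpha^2$ annihilates $E[N]$, so $\alpha^2 = N \cdot u$ for some $u \in \Aut(E)$. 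In the generic case $\Aut(E) = \{\pm 1\}$, $u = \pm 1$ so $\alpha^2 = \pm N \in \Z$; combined with $\alpha\bar\alpha = N$, this forces $\alpha = -\bar\alpha$ (the case $\alpha = \bar\alpha$ would give a non-cyclic kernel $E[\sqrt{N}]$). Thus $\alpha^2 = -N$ and $\sqrt{-N} \in \End(E)$ as a primitive element (primitivity being exactly the condition that $\ker\sqrt{-N}$ be cyclic of order $N$). For the exceptional $j$-invariants $0, 1728$, a short case analysis of the units shows that the remaining values of $u$ can only produce fixed points for $N \le 4$.

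So fixed points correspond to isomorphism classes of elliptic curves $E$ for which $\End(E)$ is an order $\O$ in $K := \Q(\sqrt{-N})$ containing $\sqrt{-N}$ primitively; by CM theory there are $h(\O)$ such isomorphism classes for each such $\O$. The order $\Z[\sqrt{-N}]$ has discriminant $-4N$ and always contributes $h(-4N)$. When $N \equiv 1, 2 \pmod 4$, $\Z[\sqrt{-N}]$ is the maximal order of $K$, so it is the only contributor. When $N \equiv 0 \pmod 4$, larger orders exist, but $\sqrt{-N} = 2\sqrt{-N/4}$ is divisible by $2$ in any such order, hence not primitive; again only $h(-4N)$. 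When $N \equiv 3 \pmod 4$, $\Z[\sqrt{-N}]$ has index $2$ in the maximal order $\O_K = \Z[\tfrac{1+\sqrt{-N}}{2}]$ (of discriminant $-N$), and a direct norm-trace computation shows $\sqrt{-N}$ remains primitive in $\O_K$; thus both orders contribute, yielding $h(-4N) + h(-N)$.

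The main obstacle is the careful bookkeeping of the bijection between fixed points and CM data: verifying primitivity in each candidate order, ensuring that $\alpha \mapsto -\alpha$ is accounted for (so that we count isomorphism classes of $E$ rather than of pairs $(E,\alpha)$), and dispensing with the exceptional-automorphism contributions at $j = 0, 1728$ (whose CM orders lie in $\Q(i)$ and $\Q(\sqrt{-3})$ and so only interact with very restricted $N$, all excluded by $N > 4$). Once these technicalities are handled, summing class numbers over the admissible orders gives the stated formula.
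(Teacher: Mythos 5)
The paper gives no proof of this lemma at all; it is quoted from Ogg and Kenku, so there is no internal argument to compare against. Your strategy --- identifying the ramification points of $f_N$ with fixed points of $w_N$, translating these into elliptic curves admitting $\sqrt{-N}$ primitively in their endomorphism ring, and summing class numbers over the admissible orders --- is exactly the classical argument, and your reduction to $\alpha^2=-N$ (including the disposal of the case $\alpha=\bar\alpha$ and of the extra units at $j=0,1728$) is sound.

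There are, however, two genuine gaps. First, your enumeration of the contributing orders is only valid for squarefree $N$. You assert that for $N\equiv 1,2\pmod 4$ the ring $\Z[\sqrt{-N}]$ is the maximal order of $K$, and that for $N\equiv 3\pmod 4$ it has index $2$ in the maximal order $\Z[\tfrac{1+\sqrt{-N}}{2}]$ of discriminant $-N$. Both statements fail when $N$ has a square factor: for $N=45$ the maximal order of $\Q(\sqrt{-45})=\Q(\sqrt{-5})$ is $\Z[\sqrt{-5}]$, and for $N=63$ the ring $\Z[\tfrac{1+\sqrt{-63}}{2}]$ has discriminant $-63$ but is the order of conductor $3$ in $\Q(\sqrt{-7})$, not the maximal order. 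The correct bookkeeping is to write $-4N=d_Kf^2$ with $d_K$ fundamental, run over all orders $\O_g$ of conductor $g\mid f$ (these are exactly the orders containing $\Z[\sqrt{-N}]$), and test primitivity of $\sqrt{-N}$ in each by expressing it in the basis $\{1,g\omega\}$: one finds it is primitive only for $g=f$ (discriminant $-4N$, always), and additionally for $g=f/2$ precisely when $d_K\equiv 1\pmod 4$ and $f/2$ is odd, which is equivalent to $N\equiv 3\pmod 4$ and gives the discriminant $-N$; every strictly larger order is excluded. This recovers the stated formula for all $N$, but it is a computation your write-up does not contain. Second, $\nu(N;N)$ counts all ramification points of $f_N$, cusps included, and you never rule out fixed cusps. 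This is not a formality: $w_4$ fixes the cusp $1/2$ of $X_0(4)$, which is exactly why the lemma requires $N>4$ (there $\nu(4;4)=2$ while $h(-16)=1$). You need to record that for $N>4$ the involution $w_N$ permutes the cusps without fixed points (it sends a cusp of denominator $d$ to one of denominator $N/d$, and the finitely many cusps with $d^2=N$ are checked to be moved).
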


In fact in \cite[Remark 2]{FurumutoHasegawa99} there is also a description of $v(d;N)$ when $d \neq N$. While we used that formula in some explicit calculations, it is not necessary for the main argument.

\begin{proposition} \label{prop:hyp_easy}
Let $X$ be a curve of genus $g$ over an algebraically closed field $k$. Let $f:X\rightarrow \PP^1$ be a morphism of prime degree $p$, and suppose $G  \leq \Aut_{k} X$ is a subgroup such that $X/G$ is of genus 0. If $g > (p-1)(\#G-1)$ then $f$ is cyclic and there is an automorphism $\sigma$ of order $p$ in $G$ such that $f_{\overline k}$ is taking the quotient by $\sigma$.

\end{proposition}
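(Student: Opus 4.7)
The plan is to apply the Castelnuovo-Severi inequality (\Cref{tm:CS}) to the pair of morphisms $f : X \to \PP^1$ and the quotient $\pi : X \to X/G$, working over $\bar{k}$ so that $G$ and its subgroups act honestly on $X_{\bar{k}}$. Both targets have genus $0$, so if no non-trivial common factorization existed then Castelnuovo-Severi would give
\[
g \leq p \cdot 0 + (\#G) \cdot 0 + (p-1)(\#G - 1) = (p-1)(\#G - 1),
\]
contradicting the hypothesis. Therefore there exist a curve $X'$ and a morphism $\varphi : X \to X'$ of degree $> 1$ through which both $f$ and $\pi$ factor.

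Because $\deg f = p$ is prime and $\deg \varphi$ divides $p$ and exceeds $1$, the factorization $X \xrightarrow{\varphi} X' \to \PP^1$ of $f$ forces $\deg \varphi = p$ and $X' \to \PP^1$ to be birational; since both are smooth projective curves this means $X' \cong \PP^1$. Now $\pi$ is a Galois cover with group $G$, and $\varphi$ is an intermediate cover of $\pi$, so by the Galois correspondence $\varphi$ is of the form $X \to X/H$ for a (unique) subgroup $H \leq G$ of order $\deg \varphi = p$. A group of prime order is cyclic, say $H = \langle \sigma \rangle$ with $\sigma$ of order $p$; composing with the isomorphism $X/H = X' \cong \PP^1$ identifies $f$ with the quotient map by $\sigma$, proving that $f$ is cyclic of degree $p$ and of the asserted form.

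The argument is essentially forced once one reads off the numerics of Castelnuovo-Severi, and I expect no serious obstacle. The only subtlety is that the intermediate cover produced by Castelnuovo-Severi is a priori only a morphism of curves over $\bar{k}$, but since $G$ lives in $\Aut_{\bar{k}} X$ the Galois theory of the cover $X \to X/G$ applies over $\bar{k}$ and delivers the desired $\sigma \in G$ without any descent being needed. The role of the primality of $p$ is precisely to rule out a non-trivial factorization of $X' \to \PP^1$, which is what pins $\sigma$ down as an order-$p$ element whose quotient recovers $f$.
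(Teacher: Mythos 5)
Your proof is correct and follows essentially the same route as the paper: apply Castelnuovo--Severi to $f$ and the quotient map $X\to X/G$, use primality of $p$ to force the common factor to have degree $p$ (so that the quotient map factors through $f$), and then invoke the Galois correspondence for $X\to X/G$ to identify $f$ with the quotient by an order-$p$ subgroup. The paper's proof is a condensed version of exactly this argument, and your added remarks on working over $\bar k$ only make the same argument more careful.
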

\begin{proof}
Since $g > (p-1)(\#G-1)$, the Castelnuovo-Severi inequality applied to $g:X \to X/G$ and $f$ tells us that $f$ and $g$ have a common factor. But since $f$ is of prime degree, it follows that $g$ factors through $f$. Since, by Galois theory, all intermediate curves $X\rightarrow X' \rightarrow X/G$ are of the form $X'=X/H$ for some subgroup $H$ of $G$, it follows that there exists a subgroup $G'$ of $G$ of order $p$ such that $f$ is quotienting by $G'$, proving the claim.
\end{proof}

\begin{corollary}
Let $p$ be a prime, $N$ an integer, $d$ an Atkin-Lehner divisor of $N$, and $k$ an algebraically closed field of characteristic coprime to $N$. Fix a 'reduction mod $k$' map $\Z[\zeta_d] \to k$. Let $\Delta$ be a subgroup of $ (\Z/N\Z)^\times/\diamondop{-1} \subseteq \Aut_{\Z[1/N]} X_1(N)$, and assume $w_{d,\zeta_d} \Delta (w_{d,\zeta_d})^{-1} = \Delta$ so that $w_{d,\zeta_d}$ induces an automorphism of $X_\Delta(N)_{\Z[1/N][\zeta_d]}$. Let $A$ be the subgroup of $\aut_{\Z[1/N][\zeta_d]} X_\Delta(N)$ generated by $w_{d,\zeta_d}$ and  $ (\Z/N\Z)^\times/\diamondop{-1}/\Delta$.  If $X_0(N)_\C/w_d$ is of genus 0, and $\mathrm{genus}(X_\Delta(N)_\C) > (p-1)(\frac{\phi(N)}{\#\Delta}-1)$, then any morphism $f:X_\Delta(N)_k\rightarrow \PP^1_k$ of degree $p$ is cyclic and there is an automorphism $\sigma \in A_k$ so that $f$ is taking the quotient by $\sigma$.
\end{corollary}

A consequence of the above corollary is that $f$ is the reduction mod $k$ of a map $f_0: X_\Delta(N)_{\Z[1/N][\zeta_d]} \to \P^1_{\Z[1/N][\zeta_d]}$ of degree $p$. Indeed this $f_0$ can be obtained by lifting $\sigma$ to $A$ and then taking the quotient by this lift. In particular this corollary can be used to rule out the existence of maps of degree $p$ over $k$, by showing that there are no maps of degree $p$ over $\C$.
\begin{proof}
Note that the genus of a curve is invariant under good reduction, so the genus conditions over $\C$ imply that $g(X_0(N)_k/w_d) = 0$  and $g(X_\Delta(N))_k > (p-1)(\frac{\phi(N)}{\#\Delta}-1)$. Note that $X_0(N)_k/w_d = X_\Delta(N)_k/A_k$ and $\#A_k = \#A =\frac{\phi(N)}{\#\Delta}$ so that the corollary follows from applying \Cref{prop:hyp_easy} to $A_k$.
\end{proof}

\begin{proposition}\label{prop:conditions}
Suppose $k$ is a field such that $p=\Char k$ does not divide $N$. Let $d>1$ be an Atkin-Lehner divisors of $N$ and $f_d:X_0(N)\rightarrow X_0(N)/w_d$ the quotient map. The following hold:
\begin{enumerate}
    \item if $X_0(N)_k$ is hyperelliptic, then either $(X_0(N)/w_{d})_{\Z[1/N]}$ is of genus $0$ or \\ $\nu(d;N) \leq 4$.
    \item if $f: X_0(N)_k \to \P^1_k$ is a map of odd degree $m$ then $\nu(d;N) \leq 2m$.
    \item if $f: X_0(N)_k \to \P^1_k$ is a map of even degree $m$ then either $\nu(d;N) \leq 2m$ or $f$ factors via $f_{d}$.
\end{enumerate}
\end{proposition}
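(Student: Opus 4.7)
The plan is to apply the Castelnuovo-Severi inequality (\Cref{tm:CS}) to the pair consisting of $f_{d}\colon X_{0}(N)_{k} \to X_{0}(N)/w_{d}$ and the given map to $\P^{1}$, and to combine the resulting bound on $g(X_{0}(N))$ with the Riemann-Hurwitz formula for the degree-$2$ quotient $f_{d}$. Explicitly, Riemann-Hurwitz gives
$$g(X_{0}(N)) \;=\; 2\,g(X_{0}(N)/w_{d}) - 1 + \tfrac{1}{2}\nu(d;N),$$
so any upper bound on $g(X_{0}(N))$ in terms of $g(X_{0}(N)/w_{d})$ converts directly into a bound on $\nu(d;N)$. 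The hypothesis $p \nmid N$ guarantees that $X_{0}(N)$ and $X_{0}(N)/w_{d}$ have smooth proper models over $\Z[1/N]$, so both the genera and the number of geometric ramification points of $f_{d}$ agree with their characteristic-zero counterparts.

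For (2), since $\deg f_{d} = 2$ and $\deg f = m$ are coprime, any morphism $X_{0}(N) \to X'$ through which both $f$ and $f_{d}$ factor must have degree dividing $\gcd(m,2) = 1$. Thus \Cref{tm:CS} applies unconditionally and yields $g(X_{0}(N)) \leq 2\,g(X_{0}(N)/w_{d}) + (m-1)$, which together with the Riemann-Hurwitz identity above gives $\nu(d;N) \leq 2m$.

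For (3), $m$ is even, so a common-factor morphism $\phi\colon X_{0}(N) \to X'$ has degree dividing $\gcd(m,2) = 2$, hence degree $1$ or $2$. If $\deg \phi = 1$, \Cref{tm:CS} applies exactly as in (2) and gives $\nu(d;N) \leq 2m$. If $\deg \phi = 2$, then $\phi$ and $f_{d}$ are two degree-$2$ morphisms from $X_{0}(N)$ through which $f_{d}$ also factors, which forces $\phi$ to coincide with $f_{d}$ up to isomorphism of the target, so $f$ factors through $f_{d}$. For (1) we run the same dichotomy with the hyperelliptic map $f$ of degree $m = 2$: either \Cref{tm:CS} applies and gives $\nu(d;N) \leq 4$, or $f$ factors through $f_{d}$ via an isomorphism $X_{0}(N)/w_{d} \xrightarrow{\sim} \P^{1}_{k}$, which happens precisely when $g(X_{0}(N)/w_{d}) = 0$; since this genus is preserved under reduction from $\Z[1/N]$ to $k$, this is equivalent to $(X_{0}(N)/w_{d})_{\Z[1/N]}$ having genus $0$.

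The main (minor) obstacle is simply the bookkeeping around the common-factor hypothesis of \Cref{tm:CS}, namely observing that a shared factor of degree $2$ must be $f_{d}$ itself. Once this case analysis is in place, each of (1), (2), (3) follows from a single-line combination of Castelnuovo-Severi with Riemann-Hurwitz, so no further ideas are required.
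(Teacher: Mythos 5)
Your proof is correct and follows essentially the same route as the paper: apply the Castelnuovo--Severi inequality to $f_d$ and $f$, combine with Riemann--Hurwitz for the degree-$2$ quotient, and use smoothness over $\Z[1/N]$ to transfer the genus-$0$ condition. Your explicit observation that a common factor of $f$ and $f_d$ must have degree dividing $\gcd(m,2)$, and that a degree-$2$ common factor must be $f_d$ itself, is just a slightly more detailed version of the paper's remark that an odd-degree map cannot factor through $f_d$.
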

\begin{proof}
We prove (1); part (2) and (3) are proven analogously, where in (2) we use that an odd degree map cannot factor via $f_{d}$. First observe that since $X_0^+(N)_{\Z[1/N]}$ is smooth, the genus is preserved under base change to $k$. It follows that $X_0(N)/w_{d}$ has genus $0$ over $k$ if and only if it has genus $0$ over $\Z[1/N]$. Hence if its genus is nonzero, then $f_{d}$ is not the hyperelliptic map.

Note that since the degree of a map doesn't change under field extensions the proposition for $k$ follows from the proposition for $\bar k$, so we may assume $k$ is algebraically closed and hence perfect. Applying the Castelnuovo-Severi inequality to the maps $f_{d}$ and the hyperelliptic $h:X_0(N)\rightarrow\PP^1$, we get
\begin{equation}
    g(X_0(N))\leq 2g(X_0(N)/w_{d})+1 \label{eq:1}.
\end{equation}
Applying the Riemann-Hurwitz formula to the map $f_{d}$ we get
\begin{equation}
    2g(X_0(N))=4 g(X_0(N)/w_{d})-2+\nu(d;N) \label{eq:2}.
\end{equation}
Combining \eqref{eq:1} and \eqref{eq:2} yields the claimed result.

\end{proof}

\subsection{Hyperelliptic curves} \label{subsec:hyp}

\begin{definition}
Let $N$ be an integer, $\Delta \leq (\Z /N \Z)^\times/\diamondop{-1}$ and $p$ a prime not dividing $N$. We will say that a pair $(\Delta,p)$ is an exceptional hyperelliptic pair if $X:=X_\Delta(N)$ is not hyperelliptic over $\Z[1/N]$, but $X_{\Fbar_p}$ is hyperelliptic.
\end{definition}

We define
$$S_0:=\left\{ 34,43,45,52,57,64,67,72,73,85,93,97,163,193\right\},$$
$$H:=\left\{ 22, 23, 26, 28, 29, 30, 31, 33, 35, 37, 39, 40, 41, 46, 47, 48, 50, 59, 71 \right\},$$
and
\begin{equation} \label{eq:SH}
SH:=H \cup \{N\leq 32, 36, 49\}.
\end{equation}
The set $H$ is the set of $N$ such the $X_0(N)$ is hyperelliptic and $SH$ is the set of $N$ such that $X_0(N)$ is subhyperelliptic. The set $S_0$ consists of the values of $N$ such that $X_0(N)_{\Z[1/N]}$ is not subhyperelliptic, and $v(d;N)\leq 4$ for all Atkin-Lehner divisors $d$ of $N$.

In the following proposition and at times throughout the remainder of the paper, we will implicitly use the well-known fact that if there exists a morphism $X\rightarrow Y$ over $k$, then $\gon_k X \geq \gon_k Y$ (see for example \cite[Proposition A.1 (vii)]{Poonen2007}).

\begin{proposition} \label{cor:S0}
Let $\Delta=(\Z /N \Z)^\times/\diamondop{-1}$, i.e. $X_\Delta(N)=X_0(N)$. If $(\Delta,p)$ is an exceptional hyperelliptic pair, then $N \in S_0$.
\end{proposition}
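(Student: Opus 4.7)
The plan is to check that, for an exceptional hyperelliptic pair $((\Z/N\Z)^\times/\diamondop{-1},p)$, the two defining conditions of $S_0$ hold automatically.

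First I would verify that $X_0(N)_{\Z[1/N]}$ is not subhyperelliptic. Since $p\nmid N$, the curve $X_0(N)$ has smooth proper reduction at $p$, so the geometric genus is preserved under specialisation; hyperellipticity of $X_0(N)_{\Fbar_p}$ then forces $g := g(X_0(N)) \geq 2$. For a curve of genus $\geq 2$, being subhyperelliptic is equivalent to being hyperelliptic (gonality $1$ would force genus $0$), and the exceptional hypothesis rules out the latter over $\Z[1/N]$.

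Second, I would fix an arbitrary Atkin-Lehner divisor $d > 1$ of $N$ and apply \Cref{prop:conditions}(1) to the hyperelliptic fibre $X_0(N)_{\Fbar_p}$: the conclusion is that either $(X_0(N)/w_d)_{\Z[1/N]}$ has genus zero, or $\nu(d;N) \leq 4$. The crucial step is ruling out the first alternative, and this is where I expect the only genuine obstacle. If $X_0(N)/w_d$ were of genus zero, then because $w_d$ is an involution the quotient map $X_0(N) \to X_0(N)/w_d$ has degree $2$; base-changing to $\Qbar$ turns $X_0(N)/w_d$ into $\PP^1_{\Qbar}$ and yields a degree-$2$ morphism $X_0(N)_{\Qbar} \to \PP^1_{\Qbar}$. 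Combined with $g \geq 2$, this exhibits $X_0(N)_{\Z[1/N]}$ as hyperelliptic, contradicting the exceptional assumption. Therefore $\nu(d;N) \leq 4$ for every Atkin-Lehner divisor $d$ of $N$, and together with the previous paragraph the definition of $S_0$ gives $N \in S_0$.

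The argument is short because the substantive work has already been packaged into \Cref{prop:conditions}(1); the only real observation needed here is the elementary fact that a genus-zero Atkin-Lehner quotient of a curve of genus $\geq 2$ is, by itself, enough to force hyperellipticity, which is what closes off the first alternative of the dichotomy.
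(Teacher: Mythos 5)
Your first two paragraphs are correct and follow the same route as the paper: reduce to the dichotomy of \Cref{prop:conditions}(1) and close off the genus-zero branch by observing that a genus-zero Atkin--Lehner quotient of a genus $\geq 2$ curve forces hyperellipticity over $\Z[1/N]$, contradicting exceptionality. So you have correctly shown that $N$ satisfies the two conditions ``$X_0(N)_{\Z[1/N]}$ is not subhyperelliptic and $\nu(d;N)\leq 4$ for every Atkin--Lehner divisor $d$ of $N$.''

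The gap is in the final sentence, where you pass from these two conditions to ``$N\in S_0$ by definition.'' The proposition's $S_0$ is the explicit $14$-element list, not the set cut out by those conditions, and the two do not coincide: carrying out the computation (via \Cref{lem:ram}, the formula for $\nu(d;N)$ with $d\neq N$, and the classification of imaginary quadratic discriminants of small class number) one finds that the integers satisfying your two conditions are $S_0\cup\{88,148,232\}$. So your argument, as written, only proves $N\in S_0\cup\{88,148,232\}$. Two things are therefore missing. First, the quantitative step: some argument (the class-number computation) is needed to convert ``$\nu(d;N)\leq 4$ for all $d$'' into an explicit finite list at all; you cannot simply appeal to the prose description of $S_0$, since the statement being proved is membership in the explicit list. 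Second, and more substantively, you must exclude $N\in\{88,148,232\}$. The paper does this by exhibiting, for each such $N$, a divisor $n\mid N$ with $X_0(n)_{\Z[1/n]}$ not subhyperelliptic and $\nu(d;n)>4$ for some Atkin--Lehner divisor $d$ of $n$ (e.g.\ $n=44$ for $N=88$, where $\nu(44;44)=6$); the same Castelnuovo--Severi argument then shows $X_0(n)_{\Fbar_p}$ is not hyperelliptic, and \cite[Proposition A.1 (vii)]{Poonen2007} applied to the degeneracy map $X_0(N)\to X_0(n)$ rules out hyperellipticity of $X_0(N)_{\Fbar_p}$. Without this step the stated conclusion does not follow.
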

\begin{proof} Suppose $X_0(N)_{\F_p}$ is hyperelliptic.
By \Cref{prop:conditions} (1), it follows that either $X_0(N)_{\Z[1/N]}$ is hyperelliptic (in which case $(\Delta,p)$ is not exceptional) or $v(d;N)\leq 4$ for every Atkin-Lehner divisor $d$ of $N$. Using \Cref{lem:ram} we compute that $v(d;N)\leq 4$ for all Atkin-Lehner divisors $d$ of $N$ and $X_0(N)_{\Z[1/N]}$ is not subhyperelliptic only if $N\in S_0 \cup \{88,148,232\}$.

To rule out the values $N\in  \{88,148,232\}$ we note that for each of these $N$, there exists a divisor $n|N$ such that $X_0(n)_{\Z[1/n]}$ is not subhyperelliptic and $v(d;n)> 6$ for some Atkin-Lehner divisor $d$ of $n$. Hence for all primes $p$ not dividing $N$ (and hence not dividing $N$), it follows by the same argument as above that $X_0(n)_{\F_p}$ is not hyperelliptic. Now it follows by \cite[Proposition A.1 (vii)]{Poonen2007} that $X_0(N)_{\F_p}$ is not hyperelliptic.
\end{proof}




\subsection{Trigonal curves} \label{subsec:trig}

\begin{definition}
Let $N$ be an integer, $\Delta \leq (\Z /N \Z)^\times/\diamondop{-1}$ and $p$ a prime not dividing $N$. We will say that a pair $(\Delta,p)$ is an exceptional trigonal pair if $X_\Delta(N)$ is not trigonal over
$\Z[1/N]$, but is trigonal over ${{\Fbar}_p}$. We will say $(N,p)$ is an exceptional pair if $X_0(N)$ is not trigonal over
$\Z[1/N]$, but is trigonal over ${{\Fbar}_p}$.

\end{definition}
Let $S_1$ be the following set
\begin{align*}
S_1:=\{&34, 37, 38, 40, 43, 44, 45, 48, 50, 52, 53, 54, 57, 58, 61, 64, 67, 72, 73, 76, \\
&81, 85, 88, 93, 97, 106, 108, 109, 121, 157, 162, 163, 169, 193, 277, 397\}.
\end{align*}

\begin{proposition} \label{cor:S1}
Let $X=X_0(N)$. If $(N,p)$ is an exceptional trigonal pair, then $N \in S_1$.
\end{proposition}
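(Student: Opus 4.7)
The plan is to adapt the argument of Proposition \ref{cor:S0} to the trigonal setting, using part (2) of Proposition \ref{prop:conditions} in place of part (1). Suppose $(N,p)$ is an exceptional trigonal pair, so that $X_0(N)_{\F_p}$ admits a morphism to $\PP^1$ of degree $3$. Since $3$ is odd, part (2) of Proposition \ref{prop:conditions} immediately yields $\nu(d;N)\leq 6$ for every Atkin-Lehner divisor $d$ of $N$.

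The next step is to translate these ramification bounds into class number inequalities and enumerate the resulting finitely many candidate $N$. Applying Lemma \ref{lem:ram} to $d=N$ gives $h(-4N)+h(-N)\leq 6$ if $N\equiv 3\pmod 4$ and $h(-4N)\leq 6$ otherwise; for the remaining Atkin-Lehner divisors $d\neq N$ one uses the analogous formulas from \cite[Remark 2]{FurumutoHasegawa99} (already invoked in some explicit calculations earlier in the paper). Combined with the tables in the appendix and the (now proven) Gauss class number conjecture that guarantees finiteness, this produces an explicit finite list of $N$ satisfying $\nu(d;N)\leq 6$ for \emph{all} Atkin-Lehner divisors $d$.

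From this candidate list we then discard the $N$ for which $X_0(N)_{\Z[1/N]}$ is already trigonal, using the classification of Hasegawa and Shimura \cite{HasegawaShimura_trig}. What remains should be exactly the set $S_1$ together with, as in the hyperelliptic proof, possibly a handful of ``spurious'' values of $N$ analogous to $\{88,148,232\}$. Each such spurious $N$ is eliminated by the same divisor trick: one exhibits a divisor $n\mid N$ with $X_0(n)_{\Z[1/n]}$ not trigonal and some Atkin-Lehner divisor $d$ of $n$ with $\nu(d;n)>6$, which by the argument above shows $X_0(n)_{\F_p}$ is not trigonal for any $p\nmid N$, and then invokes \cite[Proposition A.1 (vii)]{Poonen2007} applied to the natural covering $X_0(N)\to X_0(n)$ to conclude that $X_0(N)_{\F_p}$ is also not trigonal.

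The main obstacle is the bookkeeping in the middle step: because the set $S_1$ is appreciably larger than $S_0$, the enumeration of all $N$ whose every Atkin-Lehner quotient satisfies $\nu(d;N)\leq 6$ involves significantly more class number data, and in particular one must use the class number formulas for Atkin-Lehner divisors $d\neq N$, not just $d=N$. The final ``pruning'' step via Poonen's proposition is expected to be routine once the candidate list is in hand, as it proceeds exactly as in the hyperelliptic case.
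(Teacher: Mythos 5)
Your proposal follows the paper's proof essentially verbatim: apply Proposition \ref{prop:conditions}(2) with $m=3$ to get $\nu(d;N)\leq 6$, enumerate the candidates via \Cref{lem:ram} and the class number tables, remove the $N$ with $X_0(N)_{\Z[1/N]}$ trigonal, and prune the leftover values (the paper's list is $\{148,172,232,268,652\}$) by the divisor trick combined with \cite[Proposition A.1 (vii)]{Poonen2007}. The only cosmetic difference is that you insist on the Furumoto--Hasegawa formulas for $d\neq N$, whereas the paper notes these are convenient but not strictly necessary since $\nu(N;N)\leq 6$ alone already forces $h(-4N)\leq 6$ and hence finiteness.
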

\begin{proof}
    Suppose $X_0(N)_{{\Fbar}_p}$ is trigonal.
By \Cref{prop:conditions} (2), it follows that either $X_0(N)_{\Z[1/N]}$ is trigonal (in which case $(N,p)$ is not exceptional) or $v(d;N)\leq 6$ for an Atkin-Lehner divisors $d$ of $N$. Using \Cref{lem:ram} we compute that $v(d;N)\leq 6$ and $X_0(N)_{\Z[1/N]}$ is not trigonal only if $N\in S_1\cup\{148, 172, 232, 268, 652\}$.

To rule out the values $N\in \{148, 172, 232, 268, 652\}$ we note that for each of these $N$, there exists a divisor $n|N$ such that $X_0(n)_{\Z[1/n]}$ is of gonality $>3$ and $v(d;n)> 6$ for some Atkin-Lehner divisor $d$ of $n$. Hence for all primes $p$ not dividing $N$ (and hence not dividing $n$), it follows by the same argument as above that $X_0(n)_{\F_p}$ is not trigonal. Now it follows by \cite[Proposition A.1 (vii)]{Poonen2007} that $X_0(N)_{\F_p}$ is not trigonal.
\end{proof}

\section{Checking hyperellipticity and trigonality of a given {$X_\Delta(N)$} over {$\F_p$} for all {$p$}}
\label{sec:hyp_small}
In \Cref{sec:hyp_bounds} we showed that to find all exceptional hyperelliptic pairs $(\Delta,p)$ we need to consider only finitely many subgroups $\Delta$, i.e. only those for which either the level is in $S_0$ or such that $X_0(N)$ is subhyperelliptic. Similarly, to determine all exceptional trigonal pairs $(\Delta,p)$ we need to consider only the finitely many subgroups $\Delta$ for which either the level is in $S_1$ or such that $X_0(N)$ is of gonality $\leq 3$ over $\overline \Q$.

In this section we explain how to find, for a given $\Delta$, all the $p$ such that $(\Delta, p)$ is an exceptional hyperelliptic or trigonal pair.



We will first need the following lemma.

\begin{lemma}Let $R$ be a discrete valuation ring with residue field $k$, fraction field $K$ and uniformizer $\pi$. Let $X$ be a nice (meaning smooth, projective, and geometrically integral) curve over $R$ and suppose $\mathcal L$ is a line bundle on $X$ such that $\mathcal L(X_k)$ and $\mathcal L(X_K)$ have the same dimension, then the map $\mathcal L(X)\otimes_R k \to \mathcal L(X_k)$ is an isomorphism.
\end{lemma}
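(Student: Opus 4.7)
The plan is to invoke the standard cohomology-and-base-change machinery. First I would note that $\pi \colon X \to \Spec R$ is proper (projective) and $\mathcal L$ is flat over $R$ (being a line bundle on a flat $R$-scheme), which are exactly the hypotheses needed to guarantee the existence of a bounded \emph{Grothendieck complex}: a complex $K^\bullet = [K^0 \xrightarrow{d^0} K^1 \to \cdots]$ of finitely generated free $R$-modules such that
$$H^i(X_A, \mathcal{L}_A) \cong H^i(K^\bullet \otimes_R A)$$
for every $R$-algebra $A$. Taking $A = R$ identifies $\mathcal L(X)$ with $M := \ker(d^0)$.

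Next I would exploit that $R$ is a PID. The submodule $M \subseteq K^0$ is then free, and the quotient $K^0/M \cong \im(d^0) \subseteq K^1$ is a torsion-free submodule of a free module over a PID, hence also free. Consequently the short exact sequence $0 \to M \to K^0 \to \im(d^0) \to 0$ splits, so tensoring with $k$ preserves the inclusion $M \hookrightarrow K^0$. This produces a canonical injection $M \otimes_R k \hookrightarrow K^0 \otimes_R k$ whose image lies in $\ker(d^0_k) = \mathcal L(X_k)$, and this is exactly the natural map in the statement.

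To conclude I would compare dimensions. By freeness,
$$\dim_k (M \otimes_R k) = \rank_R M = \dim_K (M \otimes_R K) = \dim_K \mathcal L(X_K),$$
which by hypothesis equals $\dim_k \mathcal L(X_k) = \dim_k \ker(d^0_k)$. An injection between finite-dimensional $k$-vector spaces of equal dimension is an isomorphism, yielding the claim.

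The main obstacle is conceptual rather than computational: $H^0$ does not commute with base change in general, and the reason it does here is precisely that the upper semicontinuous function $s \mapsto \dim_{k(s)} \mathcal L(X_s)$ is forced to be constant on the two-point base $\Spec R$ by the equality of dimensions in the hypothesis. Alternatively, one could simply invoke the cohomology and base change theorem (e.g.\ Hartshorne III.12.11, or Mumford's \emph{Abelian Varieties}, Section 5), but the Grothendieck-complex argument above is self-contained modulo the standard existence statement and avoids appealing to the full black-box form.
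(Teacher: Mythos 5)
Your argument is correct, but it takes a different route from the paper's. The paper produces the injection $\mathcal L(X)\otimes_R k \hookrightarrow \mathcal L(X_k)$ directly and cheaply: it takes global sections of the short exact sequence $0 \to \mathcal L \xrightarrow{\cdot \pi} \mathcal L \to \mathcal L/\pi\mathcal L \to 0$ and uses left-exactness of $H^0$ together with the identification $\mathcal L/\pi\mathcal L \cong i_*(\mathcal L|_{X_k})$; you instead invoke the existence of a Grothendieck complex $K^\bullet$ of finite free $R$-modules computing cohomology after arbitrary base change, and obtain the injection from the splitting of $0 \to \ker d^0 \to K^0 \to \im d^0 \to 0$ over the PID $R$. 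Both proofs then close identically: a finitely generated torsion-free (hence free) $R$-module has $\dim_k(M\otimes_R k) = \operatorname{rank}_R M = \dim_K(M\otimes_R K)$, and the hypothesis forces the injection between equidimensional $k$-vector spaces to be an isomorphism. Your version buys generality (it works verbatim for higher $H^i$ and for arbitrary coherent sheaves flat over $R$, and makes transparent why constancy of $h^0$ on the two-point base is the operative condition), at the cost of appealing to the Grothendieck-complex existence theorem as a black box; the paper's version is self-contained at the level of a three-term sheaf sequence. One small point worth making explicit in your write-up: the identification $\ker(d^0)\otimes_R K \cong \mathcal L(X_K)$ uses that kernels commute with the flat base change $R \to K$, which is immediate but is doing real work in your dimension count.
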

\begin{proof}
This is done by taking global sections of the exact sequence $$0 \to \mathcal L \xrightarrow{\cdot \pi} \mathcal L \to \mathcal L/\pi \mathcal L \to 0.$$
Since $\mathcal L/\pi \mathcal L \cong \mathcal L \otimes k$, taking global sections of this sequence gives an injection $\mathcal L(X)\otimes_R k \to \mathcal L(X_k)$. Comparing dimensions shows that it has to be an isomorphism.
\end{proof}

Let $X$ be nice curve over $\Z[1/N]$ of genus $g>2$. We use the notation set up in \Cref{sec:background}, in particular, $V:=H^0(X,\Omega_{X_{\Z[1/N]}})$. We have that $V$, $\Sym^2(V)$ and $R(X)_2=H^0(X, \Omega ^{\otimes 2}_{X_{\Z[1/N]}})$ are free $\Z[1/N]$-modules of rank $g$, $\binom{g+1}{2}$ and $3g-3$, respectively.

From the previous lemma we get
\begin{align}
    V\otimes \F_p &\simeq H^0(X_{\F_p}, \Omega_{X_{\F_p}}), \label{eq1.1}\\
    \Sym^m(V) \otimes \F_p &\simeq \Sym^m(V\otimes \F_p), \label{eq1.2}\\
    R(X)_m \otimes \F_p \simeq R(X_{\F_p})_m&=H^0(X_{\F_p}, \Omega ^{\otimes m}_{X_{\F_p}}).  \label{eq1.3}\
\end{align}

The degree $m$ part $(I_{can, \F_p})_m$ of the canonical ideal of $X_{\F_p}$ can be represented as
\begin{equation}(I_{can, \F_p})_m=\ker \left( f^m_{can,\F_p}: \Sym^m(H^0(X_{\F_p}, \Omega_{X_{\F_p}}))\rightarrow H^0(X_{\F_p}, \Omega ^{\otimes m}_{X_{\F_p}}) \right). \label{eq_main}
\end{equation}

\subsection{Hyperelliptic curves}
It follows by \Cref{lem:hyp_crit} and \cite[Table Ia]{VoightDZB} that $X_{{\Fbar}_p}$ is hyperelliptic if and only if
\begin{equation}
    \dim (I_{can, \F_p})_2= \binom{g-1}{2}.
\end{equation}

Now the map $f^2_{can,\F_p}$ is the reduction mod $p$ of the map $f^2_{can}: \Sym^m(V) \to R(X)_2$. By putting a matrix representing the map $f^2_{can}$ into Smith normal form it is easy to see modulo which primes the dimension of the kernel is $\binom{g-1}{2}$. So we have a criterion to detect the primes of hyperelliptic reduction.

To explicitly determine $H^{0}(X_\Gamma, \Omega _{X_\Gamma})$ for a modular curve $X_\Gamma$ corresponding to the congruence group $\Gamma \supset \Gamma(N)$, we use the isomorphism \cite[Eq. 12.1.4]{DiamondIm}
\begin{equation}H^{0}(X_{\Gamma, \Z[1/N]}, \Omega _{X_{\Gamma,\Z[1/N]}}) \cong S_{2}(\Gamma,\Z[1/N]),\label{eq:cuspforms}\end{equation}
where for a ring $R$,we denote by $S_{2}(\Gamma,R)$ is the space of cusp forms of weight $2$ with coefficients in $R$. The map
$$ f^2_{can,\Z[1/N]}: \Sym^2(H^0(X, \Omega_{X_{\Z[1/N]}}))\rightarrow H^0(X_{\Z[1/N]}, \Omega ^{\otimes 2}_{\Z[1/N]}) $$
can be computed on $\mathrm{Sym}^2 S_2(\Gamma,\Z[1/N])$ by multiplying $q$-expansions. On the other hand, $(\Omega^1_{X/\Z[1/N]} \otimes \Omega^1_{X/\Z[1/N]})$ can be identified with the subspace of $S_4(\Gamma,\Z[1/N])$ of cusp forms of weight 4 that have a double zero at all cusps.

In particular, after obtaining a matrix representing the $\Z[1/N]$-module homomorphism $f^2_{can,\Z[1/N]}$ as explained above, and putting it in Smith normal form, one can easily read out the exact primes $p$ where the rank of this matrix will change upon reduction modulo $p$. So we have translated everything into ranks of matrices that can easily be computed in terms of cusp forms.

\subsection{Trigonality of nonhyperelliptic curves} \label{subsec:trig_small}

For the entirety of this section let $X$ be nice curve over $\Z[1/N]$ of genus $g>3$ such that furthermore $X_{{\Fbar}_p}$ is not hyperelliptic for any prime $p$ coprime to $N$. Then we can also detect whether $X_{{\Fbar}_p}$ is either trigonal  or a smooth plane quintic. Namely by \Cref{lem:hyp_crit}  this happens if and only if
\begin{equation}
\dim \left((I_{can,\F_p})_3/(V \otimes \F_p \cdot (I_{can,\F_p})_2)\right) = g-3.
\end{equation}

The assumption that $X_{\F_p}$ is not hyperelliptic for all primes $p$ coprime to $N$ implies that $R(X_{\F_p})_m$ is generated in degree $1$. In particular the map $f^m_{can,\F_p}$ in \eqref{eq_main} is surjective and the ranks of the matrices associated to $f_{can,\Q}^m$ and $f_{can,\F_p}^m$ are the same, from which it also follows that the dimension of $(I_{can, \F_p})_m$ is the rank of $(I_{can})_m$ as a $\Z[1/N]$-module. As a consequence we have
\begin{equation} (I_{can})_m \otimes \F_p \cong (I_{can, \F_p})_m \label{eq_ican_iso}
\end{equation}
The importance of the above isomorphism is that for all primes $p$ one has that the linear map $\mu_{\F_p}: (V \otimes \F_p) \otimes (I_{can,\F_p})_2 \to (I_{can,\F_p})_3$ is just the reduction modulo $p$ of the linear map:
\begin{equation}
    \mu: V \otimes (I_{can})_2 \to (I_{can})_3.
\end{equation}

From the surjectivity of \eqref{eq_main} one can also compute $\dim (I_{can,\F_p})_m$. Indeed
\begin{align}
    \dim (I_{can,\F_p})_m &= \dim \Sym^m(H^0(X_{\F_p}, \Omega_{X_{\F_p}})) - \dim H^0(X_{\F_p}, \Omega ^{\otimes m}_{X_{\F_p}}), \\
    &= \binom{g+m-1}{m} - (2m-1)(g-1).
\end{align}

Putting the above equalities together one has that the primes of trigonal or smooth plane quintic reduction are exactly the primes such that the matrix $\mu$ has rank $\binom{g+3-1}{3} - 5(g-1) - (g-3)$ modulo $p$. Again, these primes can easily be read of from the matrix $\mu$ after one puts $\mu$ into Smith normal form.

However, the matrix $\mu$ has a domain and codomain whose dimension grow as a cubic polynomial in $g$ so computing this matrix and putting it in Smith normal form might become computationally very expensive once $g$ becomes large. And in fact it does become too expensive for some of the curves for which we wanted to compute the primes of trigonal reduction. However if $X(\Z[1/N])$ contain an integral point then the computation can be significantly sped up, as we will describe now.

 From the discussion in the first paragraph of \cite[p.18]{VoightDZB} we directly get the following lemma.
\begin{lemma}[{\cite[Section 2.5]{VoightDZB}}] \label{VDZB:crit} Let $X_k$ be a non-hyperelliptic curve of genus $\geq 4$ over an algebraically closed field $k$ and $X_{k,2} $ be the variety cut out by $(I_{can,k})_2$, i.e. the quadrics vanishing on $X_k$. Then \begin{itemize}
    \item[(1)] $X_k$ is trigonal or a smooth plane quintic if and only if $X_{k,2}$ is a surface.

    \item[(2)] $X_k$ is not trigonal or a smooth plane quintic if and only if $X_{k,2} =X_k$.
\end{itemize}
\end{lemma}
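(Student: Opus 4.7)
The plan is to deduce this lemma from the Enriques--Babbage--Petri theorem, since the statement is essentially a geometric reformulation of the Petri data recorded in \cite[Table Ia]{VoightDZB} already invoked in \Cref{lem:hyp_crit}. The key input I would cite is: for a non-hyperelliptic curve $X_k$ of genus $g\geq 4$ over an algebraically closed field, the canonical ideal $I_{can,k}\subseteq \Sym(V_k)$ is generated in degree $2$ if and only if $X_k$ is neither trigonal nor a smooth plane quintic.

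Granted this, statement (2) is essentially immediate. If $X_k$ is neither trigonal nor a smooth plane quintic then $I_{can,k}$ is generated by its degree-$2$ part, so the subscheme $X_{k,2}\subseteq \PP^{g-1}_k$ cut out by $(I_{can,k})_2$ coincides with $\Proj(\Sym(V_k)/I_{can,k})$, which is the canonical image of $X_k$; hence $X_{k,2}=X_k$. Conversely, if $X_{k,2}=X_k$ then the ideal of $X_k$ is generated in degree $2$, so by Petri $X_k$ is not trigonal and not a smooth plane quintic.

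For (1), after (2) it suffices to show that in the trigonal and smooth plane quintic cases $X_{k,2}$ is actually a surface, rather than a higher dimensional variety. In the trigonal case, the classical construction of the rational normal scroll swept out by the trigonal pencil gives a $2$-dimensional scroll $S\subseteq \PP^{g-1}_k$ of degree $g-2$ containing the canonical image of $X_k$, and one verifies (as in \cite[Section~2.5]{VoightDZB}, following Saint-Donat) that every quadric through $X_k$ vanishes on $S$ and that $S$ is itself cut out by quadrics; hence $X_{k,2}=S$ is a surface. In the smooth plane quintic case, $g=6$ and the canonical map factors as $X_k\hookrightarrow \PP^2_k \xrightarrow{\nu_2} \PP^5_k$, where $\nu_2$ is the second Veronese embedding; again every canonical quadric vanishes on the Veronese surface $\nu_2(\PP^2_k)$, which is cut out by quadrics, so $X_{k,2}=\nu_2(\PP^2_k)$ is a surface. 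The reverse implication in (1) follows from (2): if $X_{k,2}$ is a surface then $X_{k,2}\neq X_k$, so by (2) $X_k$ must be trigonal or a smooth plane quintic.

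The main obstacle here is not really mathematical but expository, namely locating the cleanest reference for the scroll/Veronese description of the quadric hull in positive characteristic; fortunately \cite{VoightDZB} packages exactly what is needed, so the proof reduces to a short citation together with the dimension bookkeeping above.
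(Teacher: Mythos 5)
Your proposal is correct and rests on the same foundation as the paper's proof, which simply cites the Enriques--Babbage--Petri discussion in \cite[Section 2.5, p.~18]{VoightDZB}; you have merely unpacked that citation into the standard scroll/Veronese dichotomy. One small caveat: in your converse of (2), the equality $X_{k,2}=X_k$ of varieties does not by itself give that the ideal is \emph{generated} in degree $2$ (only that the saturation of the quadric ideal cuts out the curve), but the conclusion still follows from the contrapositive of your forward implication in (1), so the lemma is proved as you structure it.
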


Now let $P \in X(\Z[1/N])$ be a point. From \Cref{VDZB:crit} it follows that the tangent space $T_P X_{k,2}$ is $1$-dimensional if and only if $X$ is not trigonal or a smooth plane quintic. Since $X$ is not hyperelliptic modulo any prime, the canonical embedding allows one to see $X$ as a subvariety of $\P^{g-1}_{\Z[1/N]}$. Let $x_0, x_1,\ldots,x_{g-1}$ be the coordinates on $\P^{g-1}_{\Z[1/N]}$ of some affine neighborhood of $P$ and $f_i$ be generators of $(I_{can})_2$ on this neighborhood. Because of \eqref{eq_ican_iso} the reductions $f_i$ modulo $p$ are also generators of $(I_{can,\F_p})_2$; we compute $T_{P_{\F_p}} X_{\F_p,2}$ as the kernel of the Jacobian matrix $J=\left(\frac{\delta f_i(P)}{\delta x_i} \right)_{i,j}$ modulo p. This is a matrix that can itself be written down over $\Z[1/N]$, and as before putting it in Smith normal form allows us to easily read out the possible trigonal or smooth plane quintic primes, by computing the primes such that this matrix has rank $< g-2$.

\subsection{Smooth plane quintics}\label{subsec:spq}

We note that the methods of our paper do not distinguish between smooth plane quintics and trigonal curves, hence the methods of \Cref{subsec:trig_small} are also used to detect possible smooth plane quintics. However, since plane smooth quintics necessarily have genus 6, this greatly reduces the number of curves that need to be considered. It fortunately turns out that no intermediate modular curves of genus 6 satisfy the necessary and sufficient conditions of being either trigonal or a smooth plane quintic.

\section{Proof of Theorems \ref{t1} \ref{t2} and \ref{t3}}
\begin{proof}[Proof of \Cref{t1}]

We first prove that there are no exceptional pairs $(\Delta,p)$, where $\Delta$ is the trivial group; this is done as explained in \Cref{sec:hyp_small}. By \Cref{cor:S0} the values that need to be checked are $N\in S_0$.  We obtain that for $p\nmid N$, the curve $X_0(N)_{\F_p}$ is hyperelliptic if and only if $X_0(N)_{\Z[1/N]}$ is.

It remains to consider the nontrivial subgroups $\Delta$. Suppose $\Delta$ is nontrivial and $(\Delta, p)$ is an exceptional hyperelliptic pair. If there exists a morphism of curves $X_k\rightarrow Y_k$ defined over a field $k$ and $X_k$ is hyperelliptic, then it follows that $Y_k$ has to be subhyperelliptic \cite[Proposition A.1. (vii)]{Poonen2007}. It follows, that $X_\Delta(N)$ can be hyperelliptic over $\F_p$ only if $X_0(N)_{\Z[1/N]}$ is hyperelliptic. Since for any $N$ there are finitely many $\Delta$ of level $N$, we are reduced to checking the hyperellipticity of finitely many $X_\Delta(N)$ to complete the proof of \Cref{t1}.

All the computations to verify this take 116 seconds, and we find the unique exceptional pair $\Delta:=\langle 4 \rangle \leq (\Z/ 37\Z)^\times$ and $p=2$.
\end{proof}

\begin{proof}[Proof of \Cref{t2}]
We first determine the exceptional trigonal pairs for the trivial groups $\Delta$, i.e. $X_\Delta(N)=X_0(N)$. By \Cref{cor:S1}, the values that need to be checked are $N\in S_1.$ We follow the procedure described in \Cref{subsec:trig_small} and get the exceptional trigonal pair $X_0(73)$ and $p=2$.

Using the arguments as in the proof of of \Cref{t1}, it follows that it remains to consider nontrivial subgroups $\Delta$, and the ones that need to be considered are the ones such that $X_0(N)_{\Z[1/N]}$ is of gonality $\leq 3$, and in addition the $\Delta$ of level 73 for $p=2$. We check this, and after 23 minutes of computation obtain that there are no additional exceptional pairs.
\end{proof}

\begin{proof}[Proof of \Cref{t3}]
Using the arguments as in the proof of \Cref{t2} (and the fact that we need only consider curves of genus 6), it turns out there are no intermediate modular curves that are smooth plane quintics in any characteristic.
\end{proof}

\section{Fields of definition of trigonal maps on non hyperelliptic curves} \label{sec:fields_of_definition}
While \Cref{t2} completely solves \Cref{q2} B) for intermediate modular curves and $d=3$, it remains to consider \Cref{q2} A) for trigonal curves over finite fields $\F_q$. Let $X/k$ be a curve of genus $g$ with $X(k)\neq \emptyset$ that is trigonal over $\overline{k}$ and not subhyperelliptic. Then $X$ is trigonal over $k$ if $g=3$ (\cite[Proposition A.1 (iv)]{Poonen2007}) or $g\geq 5$ \cite[Corollary 4.6 (i)]{NajmanOrlić}. Hence the only case of interest here is $g=4$.
\begin{proposition} Let $X$ be a curve of genus $4$ over $\Z[1/N]$ of non-hyperelliptic reduction at all primes of good reduction with $X(\Z[1/N])\neq \emptyset$. Then the canonical model of $X$ is a smooth complete intersection of a cubic and a quadric $Q$ in $\PP^3_{\Z[1/N]}$. Let $D$ be the discriminant of an extension of this quadric $Q$ to $\Z$; we assume that $D$ is not a perfect square and let $\O(D)$ be the order of discriminant $D$. Then
\begin{enumerate}
    \item $X$ is trigonal over the quadratic field $\O(D) \otimes \mathbb Q$, but not over $\mathbb Q.$
    \item For all positive odd integers $n$ and all rational primes $p$ coprime to $N$ at which the order $\O(D)$ is maximal, $X$ is trigonal over $\F_{p^n}$ if and only if $p$ splits or ramifies in $\O(D)$.
    \item For all positive even integers $n$ and all rational primes $p$ coprime to $N$, $X$ is trigonal over $\F_{p^n}$.
\end{enumerate}
\end{proposition}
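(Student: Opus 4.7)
The plan is to use the classical fact that for a non-hyperelliptic genus $4$ curve $X$, the canonical image in $\PP^3$ lies on a unique quadric $Q$, and that the trigonal pencils $g^1_3$ on $X$ correspond bijectively to the rulings of $Q$: two rulings (hence two distinct $g^1_3$'s) when $Q$ is smooth, and a single ruling when $Q$ is a cone (rank $3$). Under the good-reduction hypothesis this picture is preserved over $\Z[1/N]$ and over every residue field at a prime coprime to $N$, so the question of when $X$ is trigonal over a field $k$ reduces to the question of when $Q_k$ has a ruling defined over $k$.

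The second step is the field-theoretic description of these rulings. For a smooth quadric of discriminant $D$ in $\PP^3_k$ (with $\Char k$ coprime to $D$), the two rulings are defined over $k$ if and only if $D$ is a square in $k$; otherwise they are swapped by $\Gal(k(\sqrt D)/k)$. For a singular rank-$3$ quadric, the unique ruling is always defined over $k$ via projection from the $k$-rational vertex (the unique singular point of $Q$, automatically rational). Part (1) now follows immediately: by hypothesis $D$ is not a square in $\Q$, so neither ruling is $\Q$-rational, but both become rational over $\Q(\sqrt D) = \O(D) \otimes \Q$, giving trigonality over $\Q(\sqrt D)$ but not over $\Q$.

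For parts (2) and (3), I reduce modulo $p$. The hypothesis that $\O(D)$ is maximal at $p$ ensures that the splitting behaviour of $p$ in $\O(D)$ matches the quadratic character of $D$ modulo $p$: $p$ splits when $\bar D$ is a nonzero square in $\F_p$; $p$ ramifies when $p \mid D$; and $p$ is inert when $\bar D$ is a nonzero non-square. In the split case $\bar D$ is a square in every $\F_{p^n}$, so $Q_{\F_{p^n}} \cong \PP^1 \times \PP^1$ and $X$ is trigonal over $\F_{p^n}$ for all $n$. In the ramified case $Q_{\F_p}$ is singular, but the smoothness of $X_{\F_p}$ forces $Q_{\F_p}$ still to have rank at least $3$ (lower rank would make the complete intersection reducible), hence exactly $3$; so $Q_{\F_p}$ is a cone with a unique $\F_p$-rational vertex, and $X$ is trigonal over every $\F_{p^n}$. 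In the inert case $\bar D$ is a square in $\F_{p^n}$ if and only if $2 \mid n$. Combining these three cases yields (2) for odd $n$ (trigonal precisely when $p$ splits or ramifies) and (3) for even $n$ (trigonal in all three cases).

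The main obstacle is the first step: establishing the rulings-to-trigonal-maps correspondence at the level of schemes over $\Z[1/N]$, and verifying in the ramified case that $Q_{\F_p}$ cannot degenerate below rank $3$ and that projection from the vertex produces a genuine degree-$3$ morphism $X_{\F_p} \to \PP^1_{\F_p}$. Once these geometric facts are in place, the rest of the argument reduces to elementary Galois theory of quadratic extensions combined with the quadratic character of $\bar D$ in the various residue fields.
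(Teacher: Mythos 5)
Your overall framework (trigonal pencils $\leftrightarrow$ rulings of the unique quadric $Q$ containing the canonical model) is the same as the paper's, and your treatment of part (1), of odd $p$, and of even $n$ is sound. The paper does handle the split/ramified case differently --- instead of analysing the reduction of $Q$ modulo $p$ (cone versus smooth quadric), it uses the purely formal inequality $\gon X_{\F_{p^n}} \leq \gon X_{\F_p} \leq \gon X_{\O(D)_{\mathfrak p}\otimes \Q} = 3$, specializing at a prime $\mathfrak p$ of $\O(D)$ of residue field $\F_p$; this sidesteps your ``main obstacle'' about the rank of $Q_{\F_p}$ in the ramified case entirely, and also works uniformly at $p=2$.

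However, there is a genuine gap in your inert case at $p=2$, and it is not a technicality --- your argument there proves the wrong statement. You reduce inertness of $p$ to ``$\bar D$ is a nonzero non-square in $\F_p$'' and rationality of the rulings to ``$D$ is a square in $k$''. Both reductions fail in characteristic $2$: the prime $2$ is inert in $\O(D)$ precisely when $D \equiv 5 \pmod 8$, in which case $\bar D = 1$ \emph{is} a square in $\F_2$ (every element of $\F_{2^n}$ is a square), so your criterion would declare the rulings of $Q_{\F_2}$ rational and $X_{\F_2}$ trigonal --- the opposite of what part (2) asserts. The underlying issue is that for quadratic forms in characteristic $2$ the field of definition of the two rulings is not governed by the discriminant of the Gram matrix (which is not even the right invariant there) but by an Arf-invariant-type condition. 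This is exactly why the paper gives a separate explicit argument for $p=2$: after normalizing coordinates so that a rational point $P$ is $(0{:}0{:}0{:}1)$, it computes $D = a_{0,3}^2(a_{1,2}^2-4a_{1,1}a_{2,2})$, notes that $D\equiv 5 \pmod 8$ forces $a_{1,1}\equiv a_{1,2}\equiv a_{2,2}\equiv 1 \pmod 2$, and then checks that the tangent-plane section of $Q_{\F_2}$ at $P$ is the irreducible conic $X_1^2+X_1X_2+X_2^2$, so the two lines through $P$ (and hence the two rulings) are conjugate over $\F_4$. You need some such argument to close the $p=2$ inert case; as written, your proof does not cover it.
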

\begin{proof}
Let $k$ be a field of characteristic coprime to $N$ with $X:=X_k$ smooth and non-hyperelliptic. Furthermore, let  $W_{d}^r(X_k) \subset \Pic^d X_k$ denote the Brill-Noether variety corresponding to line bundles that have at least $r+1$ linearly independent global sections. Note that since $X(\Z[1/N]) \neq \emptyset$ we also have $X(k) \neq \emptyset$ and hence any point in $\Pic^{d} X (k)$ actually comes from a $k$-rational line bundle of degree $d$ \footnote{On the other hand, if $X(k) = \emptyset$ and $k$ is perfect, then a point in $\Pic^{d} X (k)$ only gives a line bundle over $\overline k$ that is isomorphic to all its Galois conjugates.}. In particular, every $k$-rational point on the Brill-Noether variety $W_3^1(X_k)$ comes from a line bundle defined over $k$ of degree $3$ with two linearly independent global sections, and hence gives rise to a non-constant $k$-rational function degree $\leq 3$, which is actually of degree 3 by the non-hyperellipticity assumption. Conversely, every $k$-rational function of degree $3$ gives a rational point on $W_3^1(X_k)$. In conclusion, $X_k$ is trigonal if and only if $W_{d}^r(X_k)(k) \neq \emptyset$.

By \cite[p.206]{AOC1}\footnote{We note that the setting of \cite{AOC1} is over the complex numbers, but that the theory carries over to any algebraically closed field.}, the Brill-Noether variety $W_3^1(X_k)$ either has 1 or 2 points over $\overline{k}$. The case $W_3^1(X)$ being isomorphic to $X$ mentioned there cannot happen since we assumed $X$ to not be hyperelliptic. If $\#W_3^1(X_k)(\overline k)=1$ then $\#W_3^1(X_k)(k)=1$ and $X_k$ is trigonal by the above discussion. On the other hand if $\#W_3^1(X_k)(\overline k)=2$, then there are exactly two possibilities. Namely:
\begin{itemize}
    \item[(a)] $\#W_3^1(X_k)(k)=2$ and hence $X_k$ is trigonal,
    \item[(b)] $\#W_3^1(X_k)(k)=0$ and hence $X_k$ is not trigonal but becomes trigonal over the quadratic extension of $k$ over which the two points of $\#W_3^1(X)$ are defined.
\end{itemize}
We let $F_{X_k}$ denote the smallest field extension of $k$ for which $\#W_3^1(X_k)(F_{X_k}) > 0$. So either $F_{X_k} = k$ or it is the quadratic field over which the two points of $\#W_3^1(X)$ are defined in case (b) above.

Since $X_k$ is not hyperelliptic, the canonical model of $X$ is a smooth complete intersection of a cubic and a quadric $Q_k$ in $\P^3_k$. Each $g_3^1$ corresponding to one of the points in $W_3^1(X)(k)$ is a family of lines in $\P^3_k$ intersecting $X_k$ three times, counting multiplicities. Let $L$ denote one of these lines. By Bezout's theorem this line $L$ lies on the quadric $Q_k$. In particular, if $Q_k$ is nonsingular then the lines of the $g_3^1$ actually form a ruling of $Q$ (see \cite[p.206]{AOC1} again). In the nonsingular case, the field $F_{X_k}$  defined above is actually the field of definition of these rulings. If $\Char k>2$, then this field is obtained by adjoining to $k$ the square root of the discriminant of the polynomial defining $Q$ (see e.g \cite[p.45-46]{ClarkJNTB06}).

Part (1) immediately follows from this discussion. Indeed, $F_{X_\Q} = \Q(\sqrt{D}) = \O(D) \otimes \mathbb Q$. So for a field extension $K$ of $\Q$ we have $\# W_3^1(X_\Q)(K) > 0$ if and only if $K$ contains $\Q(\sqrt{D})$. Both of these equivalent conditions are equivalent to $X_K$ being trigonal.

Part $(3)$ follows similarly since $F_{X_{\F_p}}$ is either $\F_p$ or $\F_{p^2}$. So in particular, $\F_{p^n}$ contains $F_{X_{\F_p}}$ if $n$ is even.

Now we prove part $(2)$ in the case where $p$ splits or ramifies in $\O(D)$. Let $\mathfrak{p}$ be a prime of $\O(D)$ lying over $p$. Then by the maximality assumption on $\O(D)$ at $p$, the ring $\O(D)_{\mathfrak{p}}$ is a discrete valuation ring and the residue field of $\O(D)_{\mathfrak{p}}$ is $\F_p$. By part (1) we have $\gon X_{\O(D) \otimes \Q}  = 3$. Since the gonality of a curve can only decrease under specialization and field extensions we have $$\gon X_{\F_{p^n}} \leq \gon X_{\F_p} \leq \gon X_{\O(D)_{\mathfrak{p}} \otimes \Q} \leq \gon X_{\O(D) \otimes \Q} =3.$$ Hence $\gon X_{\F_{p^n}}=3$ by the non-hyperellipticity assumption.

What remains is to show part $(2)$ when $p$ is inert. Assume for the moment $p>2$. Since $p$ is inert in $\O(D)$ we know that $F_{X_{\F_p}} = \F_p(\sqrt{D})$ is a quadratic extension of $\F_p$ and hence isomorphic to $\F_p^2$. It follows that $X$ is trigonal over $\F_p^n$ if and only if $\F_p^n$ contains $\F_p^2$, which happens exactly when $n$ is even. So if $n$ is odd, $X_{\F_p^n}$ is not trigonal.

The case $p=2$ is slightly more subtle. While it might be possible to deal with this case abstractly, we found it clearer to take a more explicit approach. Let's write the quadric $Q$ in $\P^3$ as
$$Q = \sum_{i=0}^3 \sum_{j=0}^i a_{i,j}x_ix_j.$$ For this quadric we will also use the matrix form notation
$$
Q = \begin{bmatrix}
a_{0,0} & a_{0,1} & a_{0,2} & a_{0,3} \\
*       & a_{1,1} & a_{1,2} & a_{1,3} \\
*       & *       & a_{2,2} & a_{2,3} \\
*       & *       & *       & a_{3,3}
\end{bmatrix}.$$
Let $P \in X(\Z[1/N]) \subseteq \P^3(\Z[1/N])$ be a point. By choosing a suitable set of coordinates on $\P^3$ we may assume $P = (0:0:0:1)$ and hence $a_{3,3}=0$. By a further change of coordinates we may assume $a_{1,3}=a_{2,3}=0$ as well, so that $Q$ looks like:
$$
\begin{bmatrix}
a_{0,0} & a_{0,1} & a_{0,2} & a_{0,3} \\
*       & a_{1,1} & a_{1,2} & 0 \\
*       & *       & a_{2,2} & 0 \\
*       & *       & *       & 0
\end{bmatrix}.$$

With $Q$ as above, we have $D = a_{0,3}^2(a_{1,2}^2 - 4a_{1,1}a_{2,2})$. The assumption for $p$ to be inert in $\O(D)$ is equivalent to $D \equiv 5 \mod 8.$ In particular $Q_{\F_p}$ is nonsingular and  $a_{0,3} \equiv a_{1,2} \equiv a_{1,1} \equiv a_{2,2} \equiv 1 \mod 2$.

Since $Q_{\F_2}$ is nonsingular it has two rulings. These two rulings both contain a line passing through $P$. Additionally, starting from $P$ one can let $T \subset P^3_{\F_2}$ be the tangent space to $Q_{\F_2}$ at $P_{\F_2}$, and $T \cap Q_{\F_2}$ will be exactly the union of these same two lines. In particular if these two lines are interchanged by the action of Galois, then these two rulings will be interchanged as well.

Now let's compute $T$ on the affine chart where $x_3 \neq 0$ and let $X_0,X_1,X_2$ be the affine coordinates on $\A^3_{\F_2}$. Then the tangent space $T$ is given by $X_0=0$, so that the union of two lines on $Q_{\F_2}$ passing through $P_{\F_2}$ can be described by $X_0=0$ and  $a_{1,1}X_1^2 + a_{1,2}X_1X_2 +a_{2,2}X_2^2$. Since $a_{1,1} \equiv a_{2,2} \equiv a_{1,2} = 1 \pmod 2$, the polynomial $a_{1,1}X_1^2 + a_{1,2}X_1X_2 +a_{2,2}X_2^2$, is irreducible and hence so is also the scheme $T \cap Q_{\F_2}$. This can only happen if the geometric lines generating $T \cap Q_{\F_2}$ are Galois conjugates. In particular, the two rulings of $Q_{\F_2}$ are swapped by the action of Galois, $W_3^1(X_{\F_2})$ consist of two points whose field of definition is $\F_{2^2}$ and hence $X_{\F_2^n}$ is not trigonal when $n$ is odd.



\end{proof}

\begin{remark}
  In case $X_k$ is of genus $4$ with $X_k(k)\neq \emptyset$ and is neither hyperelliptic or trigonal over $k$, then it is necessarily tetragonal over $k$, see e.g. \cite[Proposition A.1 (iv)]{Poonen2007}.
\end{remark}

The table below lists all genus $4$ intermediate modular curves by specifying their level $N$, the group $\Delta(N)$, and the disciriminant $D$ of the corresponding order $\O$. Note that $\Delta(N)=(\Z/N\Z)^*/\pm 1$ means $X_\Delta(N)=X_0(N)$ and $D$ a perfect square means that the curve is trigonal over $\Q$. We note that the cases $X_\Delta(N)=X_0(N)$ had already been previously solved in \cite[p. 136]{HasegawaShimura_trig}.

\begin{table}[H]
\centering
\begin{tabular}{r|r|r}
$N$ & $\Delta(N)$ & $D$\\
\hline
25 & $\diamondop{7}$ & 5 \\
26 & $\diamondop{17}$ & 1 \\
26 & $\diamondop{5}$ & 1 \\
28 & $\diamondop{3}$ & 0 \\
28 & $\diamondop{13,15}$ & 4 \\
29 & $\diamondop{4}$ & 1 \\
37 & $\diamondop{4}$ & 0 \\
37 & $\diamondop{8}$ & 1 \\
38 & $(\Z/38\Z)^\times/\diamondop{-1}$ & -3 \\
44 & $(\Z/44\Z)^\times/\diamondop{-1}$ & -8 \\
50 & $\diamondop{19}$ & 1 \\
53 & $(\Z/53\Z)^\times/\diamondop{-1}$ & -15 \\
54 & $(\Z/54\Z)^\times/\diamondop{-1}$ & 1 \\
61 & $(\Z/61\Z)^\times/\diamondop{-1}$ & -4 \\
81 & $(\Z/81\Z)^\times/\diamondop{-1}$ & 0 \\
\end{tabular}
\caption{Field of definition of trigonal maps}
\label{tab:trigonal_fd}
\end{table}


\appendix

\section{}
\label{appendix}
\subsection{Discriminants of class number at most 100}\label{sec:small_class_numbers}
Let $D<0$ be a fundamental discriminant and $f$ a positive integer. Then the following formula relates class numbers of (not necessarily fundamental) discriminants to those of fundamental discriminants (see \cite[Corollary 7.28]{Cox})
\begin{equation}
h(Df^2) = \frac {h(D)f} {u_{D,f}} \prod_{p | f} \left( 1- \left(\frac D p\right) \frac 1 p \right),
\end{equation}
where $w_{D,f} = 3$ if $D=-3$ and $f \neq 1$,  $w_{D,f} = 2$ if $D=-4$ and $f \neq 1$ and $w_{D,f}=1$ otherwise. Using this formula and the list of negative fundamental discriminants of class number $\leq 100$ from  \cite[Table 4.]{Watkins2004}, it is straightforward to compile a list of all negative discriminants of class number $\leq 100$. For each class number $h \leq 100$ we record the number of discriminants $Df^2 < 0$ such that $h(Df^2)=h$ in the second column, and the smallest $Df^2$ such that $h(Df^2)=h$ in the third column.
\begin{table}[H]
\centering
\begin{tabular}{rrr|rrr|rrr|rrr}
  h &   \# & smallest &   h &   \# & smallest &   h &   \# & smallest &   h &   \# & smallest  \\
  \hline
  1 &   13 &     -163 &  26 &  227 &  -103027 &  51 &  217 &  -546067 &  76 & 1381 & -1086187 \\
  2 &   29 &     -427 &  27 &  136 &  -103387 &  52 & 1003 &  -457867 &  77 &  236 & -1242763 \\
  3 &   25 &     -907 &  28 &  623 &  -126043 &  53 &  130 &  -425107 &  78 &  921 & -1004347 \\
  4 &   84 &    -1555 &  29 &   94 &  -166147 &  54 &  806 &  -532123 &  79 &  200 & -1333963 \\
  5 &   29 &    -2683 &  30 &  473 &  -137083 &  55 &  177 &  -452083 &  80 & 3851 & -1165483 \\
  6 &  101 &    -4075 &  31 &   83 &  -133387 &  56 & 1809 &  -494323 &  81 &  338 & -1030723 \\
  7 &   38 &    -5923 &  32 & 1231 &  -164803 &  57 &  237 &  -615883 &  82 &  486 & -1446547 \\
  8 &  208 &    -7987 &  33 &  158 &  -222643 &  58 &  360 &  -586987 &  83 &  174 & -1074907 \\
  9 &   55 &   -10627 &  34 &  260 &  -189883 &  59 &  144 &  -474307 &  84 & 2990 & -1225387 \\
 10 &  123 &   -13843 &  35 &  111 &  -210907 &  60 & 2352 &  -662803 &  85 &  246 & -1285747 \\
 11 &   46 &   -15667 &  36 & 1303 &  -217627 &  61 &  149 &  -606643 &  86 &  553 & -1534723 \\
 12 &  379 &   -19723 &  37 &   96 &  -158923 &  62 &  386 &  -647707 &  87 &  313 & -1261747 \\
 13 &   43 &   -20563 &  38 &  283 &  -289963 &  63 &  311 &  -991027 &  88 & 2769 & -1265587 \\
 14 &  134 &   -30067 &  39 &  162 &  -253507 &  64 & 2915 &  -693067 &  89 &  206 & -1429387 \\
 15 &   95 &   -34483 &  40 & 1418 &  -274003 &  65 &  192 &  -703123 &  90 & 1508 & -1548523 \\
 16 &  531 &   -35275 &  41 &  125 &  -296587 &  66 &  856 &  -958483 &  91 &  249 & -1391083 \\
 17 &   50 &   -37123 &  42 &  595 &  -301387 &  67 &  145 &  -652723 &  92 & 1590 & -1452067 \\
 18 &  291 &   -48427 &  43 &  123 &  -300787 &  68 & 1227 &  -819163 &  93 &  354 & -1475203 \\
 19 &   59 &   -38707 &  44 &  909 &  -319867 &  69 &  292 &  -888427 &  94 &  598 & -1587763 \\
 20 &  502 &   -58843 &  45 &  231 &  -308323 &  70 &  702 &  -821683 &  95 &  273 & -1659067 \\
 21 &  118 &   -61483 &  46 &  328 &  -462883 &  71 &  176 &  -909547 &  96 & 7265 & -1684027 \\
 22 &  184 &   -85507 &  47 &  117 &  -375523 &  72 & 4046 &  -947923 &  97 &  208 & -1842523 \\
 23 &   78 &   -90787 &  48 & 2893 &  -335203 &  73 &  137 &  -886867 &  98 &  707 & -2383747 \\
 24 & 1042 &  -111763 &  49 &  146 &  -393187 &  74 &  472 &  -951043 &  99 &  396 & -1480627 \\
 25 &  101 &   -93307 &  50 &  440 &  -389467 &  75 &  353 &  -916507 & 100 & 2304 & -1856563 \\
\end{tabular}
\caption{Imaginary class numbers smaller than 100}
\label{tab:small_class_numbers}
\end{table}

The full list of all discriminants of class number $\leq 100$ can be found at \cite{code} in the \verb|small_class_numbers_and_ramification.ipynb| file.

\newpage
\subsection{Ramification degrees of \texorpdfstring{$X_0(N) \to X_0(N)^+$}{X0(N) -> X0(N)+} at most 100}

Using \Cref{lem:ramification_formula} and \Cref{tab:small_class_numbers}
one can easily compile a list of all integers $N$ such that the ramification degree of $X_0(N) \to X_0(N)^+$ is at most 100. For each degree $d \leq 100$ we record the number of integers $N$ that the ramification degree of $X_0(N) \to X_0(N)^+$ equals $d$, as well at the maximum of these $N$. Note that by the Riemann-Hurwitz formula the ramification degree is always even.
\begin{table}[H]
\centering
\begin{tabular}{rrr|rrr|rrr|rrr}
  d &   \# & largest &   d &   \# & largest &   d &   \# & largest &   d &   \# & largest \\
  \hline
  2 &   18 &      58 &  28 &  257 &   13297 &  54 &  125 &   48742 &  80 & 2309 &  120712 \\
  4 &   48 &     253 &  30 &   88 &   14422 &  56 &  992 &   62302 &  82 &  118 &  151237 \\
  6 &   32 &     652 &  32 &  790 &   18748 &  58 &   77 &   48778 &  84 & 1019 &  166798 \\
  8 &  128 &    1012 &  34 &   56 &   18397 &  60 &  817 &   83218 &  86 &  106 &  137197 \\
 10 &   39 &    1318 &  36 &  482 &   22768 &  62 &   96 &   85402 &  88 & 1413 &  150382 \\
 12 &  173 &    2608 &  38 &   74 &   30493 &  64 & 1857 &  106177 &  90 &  238 &  149053 \\
 14 &   35 &    2293 &  40 &  785 &   30178 &  66 &  175 &   92698 &  92 &  577 &  189352 \\
 16 &  329 &    4048 &  42 &  130 &   29437 &  68 &  493 &  102958 &  94 &  112 &  184438 \\
 18 &   62 &    5692 &  44 &  375 &   34318 &  70 &  127 &   94378 &  96 & 4289 &  198958 \\
 20 &  225 &    5377 &  46 &   78 &   47338 &  72 & 1963 &  134773 &  98 &  132 &  161302 \\
 22 &   40 &    6637 &  48 & 1618 &   41728 &  74 &  104 &   91228 & 100 &  842 &  200722 \\
 24 &  576 &   10432 &  50 &   75 &   43717 &  76 &  506 &  121972 &     &      &         \\
 26 &   63 &   11302 &  52 &  389 &   50317 &  78 &  170 &   92458 &     &      &         \\
\end{tabular}
\caption{Ramification degree smaller than 100}
\label{tab:small_ramification_degrees}
\end{table}

The full list of all integers $N$ of ramification degree $\leq 100$ can be found at \cite{code} in the \verb|small_class_numbers_and_ramification.ipynb| file.

\bibliographystyle{siam}
\bibliography{bibliography1}

\end{document}